\documentclass[12pt,letterpaper]{amsart}
\usepackage{geometry}
\geometry{body={6.75in,8.9in}, centering}
\usepackage{hyperref}
\usepackage{mathpazo}
\usepackage{amssymb,url,setspace,xcolor}
\newtheorem{theorem}{Theorem}
\newtheorem*{theorem*}{Theorem}

\newtheorem{lemma}{Lemma}
\newtheorem{corollary}{Corollary}
\theoremstyle{remark}
\newtheorem{remark}{Remark}
\newcommand{\C}{\mathbb{C}}
\newcommand{\ep}{\varepsilon}
\newcommand{\D}{\Omega}
\newcommand{\Dc}{\overline{\Omega}}
\newcommand{\dbar}{\overline{\partial}}
\newcommand{\zb}{\overline{z}}

\onehalfspace

\title[Compactness of Hankel operators with continuous symbols]{Compactness 
	of Hankel operators with symbols continuous on the closure of pseudoconvex 
	domains}

\author{Timothy G. Clos}
\address[Timothy G. Clos]{Bowling Green State University, 
	Department of Mathematics and Statistics,  Bowling Green, Ohio 43403 }
\curraddr{University of Toledo, Department of 
	Mathematics \& Statistics, 2801 W. Bancroft, Toledo, OH 43606}
\email{clost@bgsu.edu}

\author{Mehmet \c{C}el\.ik}
\address[Mehmet \c{C}elik]{Texas A\&M University - Commerce, 
	Department of Mathematics, Commerce, TX, 75428}
\email{mehmet.celik@tamuc.edu}

\author{S\"{o}nmez \c{S}ahuto\u{g}lu}
\address[S\"{o}nmez \c{S}ahuto\u{g}lu]{University of Toledo, Department of 
	Mathematics \& Statistics, 2801 W. Bancroft, Toledo, OH 43606, USA}
\email{sonmez.sahutoglu@utoledo.edu}

\subjclass[2010]{Primary 47B35; Secondary 32W05}
\keywords{Hankel operators, convex domains, pseudoconvex domains}
\date{\today}
\begin{document}
\begin{abstract}
Let $\Omega$ be a bounded pseudoconvex domain in $\mathbb{C}^2$ 
with Lipschitz boundary or a bounded convex domain in $\mathbb{C}^n$ 
and $\phi\in C(\overline{\Omega})$ such that the Hankel operator 
$H_{\phi}$ is compact on the Bergman space $A^2(\Omega)$. 
Then $\phi\circ f$ is holomorphic for any holomorphic 
$f:\mathbb{D}\rightarrow b\Omega$.  
\end{abstract}
\maketitle

Let $\D$ be a domain in $\mathbb{C}^n$ and $A^2(\D)$ denote 
the Bergman space of $\D$, the space of square integrable 
holomorphic functions on $\D$.  Since $A^2(\D)$ is a closed 
subspace of $L^2(\D)$, the space of square integrable functions 
on $\D$, there exists an orthogonal projection 
$P:L^2(\D)\rightarrow A^2(\D)$, called the Bergman projection.  
The Hankel operator $H_{\phi}:A^2(\D)\to L^2(\D)$ with symbol 
$\phi\in L^{\infty}(\D)$ is defined as $H_{\phi}f=(I-P)(\phi f)$ where 
$I$ denotes the identity operator. Hankel operators have been 
well studied on the Bergman space of the  unit disc. 
Sheldon Axler in \cite{Axler86} proved the following 
interesting theorem.

\begin{theorem*}[Axler]
Let $\phi\in A^2(\mathbb{D})$. Then $H_{\overline{\phi}}$ is 
compact if and only if $(1-|z|^2)\phi'(z)\to 0$ as $|z|\to 1$. 
\end{theorem*}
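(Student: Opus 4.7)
My plan is to exploit the Möbius invariance of $A^2(\mathbb{D})$ together with the explicit reproducing kernel $K(z,w)=\frac{1}{\pi(1-z\overline{w})^2}$. Let $\varphi_w(z)=\frac{w-z}{1-\overline{w}z}$ be the disc involution swapping $0$ and $w$, let $k_w:=K(\cdot,w)/\|K(\cdot,w)\|$ be the normalized Bergman kernel, and set $W_wf(z):=\varphi_w'(z)f(\varphi_w(z))$. A short change-of-variables check shows that $W_w$ is a unitary involution on $L^2(\mathbb{D})$ preserving $A^2(\mathbb{D})$, so it commutes with $P$ and $W_wH_{\overline{\phi}}W_w=H_{\overline{\phi\circ\varphi_w}}$. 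Since $W_w1=\varphi_w'$ is a scalar multiple of $k_w$, combining this with the elementary identity $H_{\overline{\psi}}1=\overline{\psi}-\overline{\psi(0)}$ (valid for $\psi\in A^2(\mathbb{D})$) yields the central formula
\[
\|H_{\overline{\phi}}k_w\|_{L^2}^2=\frac{1}{\pi}\int_{\mathbb{D}}|\phi\circ\varphi_w(z)-\phi(w)|^2\,dA(z),
\]
which drives both directions of the argument.

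For the forward implication, I would use that $k_w\to 0$ weakly in $A^2(\mathbb{D})$ as $|w|\to 1$, so compactness of $H_{\overline{\phi}}$ forces $\|H_{\overline{\phi}}k_w\|\to 0$. The function $g_w:=\phi\circ\varphi_w-\phi(w)$ is holomorphic with $g_w(0)=0$, so the monomial expansion gives $\|g_w\|_{A^2}^2\geq\frac{\pi}{2}|g_w'(0)|^2$. Since $g_w'(0)=\phi'(w)\varphi_w'(0)=-(1-|w|^2)\phi'(w)$, the central formula instantly produces
\[
(1-|w|^2)^2|\phi'(w)|^2\leq 2\|H_{\overline{\phi}}k_w\|^2\longrightarrow 0,
\]
which is the claimed necessary condition.

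For the converse, I would first rewrite $H_{\overline{\phi}}$ as an integral operator. Substituting $\overline{\phi(u)}=\overline{\phi(z)}+\overline{\phi(u)-\phi(z)}$ inside $P(\overline{\phi}f)(z)=\int K(z,u)\overline{\phi(u)}f(u)\,dA(u)$ and using the reproducing property gives
\[
H_{\overline{\phi}}f(z)=-\frac{1}{\pi}\int_{\mathbb{D}}\frac{\overline{\phi(u)-\phi(z)}}{(1-z\overline{u})^2}\,f(u)\,dA(u).
\]
Granted the operator-norm bound $\|H_{\overline{\psi}}\|\lesssim\sup_{z\in\mathbb{D}}(1-|z|^2)|\psi'(z)|$, I would introduce the dilations $\phi_r(z):=\phi(rz)$ for $r\in(0,1)$. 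Each $\phi_r$ extends holomorphically across $\partial\mathbb{D}$, so $\overline{\phi_r}\in C(\overline{\mathbb{D}})$ and $H_{\overline{\phi_r}}$ is compact by the standard fact that continuous symbols produce compact Hankel operators on $A^2(\mathbb{D})$. The hypothesis $\phi\in\mathcal{B}_0:=\{\psi\in\mathrm{Hol}(\mathbb{D}):(1-|z|^2)\psi'(z)\to 0\text{ as }|z|\to 1\}$ implies $\phi_r\to\phi$ in the Bloch seminorm, so applying the norm bound to $\phi-\phi_r$ exhibits $H_{\overline{\phi}}$ as the norm limit of the compact operators $H_{\overline{\phi_r}}$, hence itself compact.

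The main obstacle is proving the operator-norm bound. The planned route is to combine the pseudohyperbolic estimate $|\phi(u)-\phi(z)|\lesssim\|\phi\|_{\mathcal{B}}\,\log\frac{1+\rho(z,u)}{1-\rho(z,u)}$, where $\rho(z,u)=|\varphi_z(u)|$, with a Schur test on the integral kernel using weights of the form $(1-|z|^2)^{-\alpha}$ for a suitable $\alpha\in(0,1)$. The resulting double integrals reduce to Forelli--Rudin estimates for $\int(1-|u|^2)^{-\alpha}|1-z\overline{u}|^{-2}\,dA(u)$, but massaging the logarithmic pseudohyperbolic factor into a form compatible with these estimates is the delicate computation; once done, the little-oh hypothesis transfers verbatim to $\phi-\phi_r$ and closes the approximation argument.
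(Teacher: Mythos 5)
There is nothing in the paper to compare against here: Axler's theorem is quoted as background and the paper simply cites \cite{Axler86}, so your proposal can only be judged on its own, and on that basis it is correct in outline and is essentially the classical argument (Axler's original proof, as in Zhu's book). The necessity half is complete as written: $W_w$ is a self-inverse unitary of $L^2(\mathbb{D})$ mapping $A^2(\mathbb{D})$ onto itself, hence commutes with $P$; the identities $W_wH_{\overline{\phi}}W_w=H_{\overline{\phi\circ\varphi_w}}$ and $H_{\overline{\psi}}1=\overline{\psi}-\overline{\psi(0)}$ give your central formula; and the coefficient inequality $\|g_w\|_{A^2}^2\geq \tfrac{\pi}{2}|g_w'(0)|^2$ with $g_w'(0)=-(1-|w|^2)\phi'(w)$, combined with $k_w\to 0$ weakly, yields the little Bloch condition.

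The sufficiency half is the standard dilation argument and is sound, with two caveats. First, the whole direction rests on the bound $\|H_{\overline{\psi}}\|\lesssim \sup_{z}(1-|z|^2)|\psi'(z)|$, which you only sketch; the route you indicate does close. Since $1-|\varphi_z(u)|^2=\frac{(1-|z|^2)(1-|u|^2)}{|1-z\overline{u}|^2}$ and $\log\frac{e}{x}\leq C_{\varepsilon}x^{-\varepsilon}$ for $0<x\leq 1$, the kernel is dominated by $C_{\varepsilon}\,\|\psi\|_{\mathcal{B}}\,(1-|z|^2)^{-\varepsilon}(1-|u|^2)^{-\varepsilon}|1-z\overline{u}|^{2\varepsilon-2}$, and Schur's test with $h(z)=(1-|z|^2)^{-\alpha}$ for $\varepsilon<\alpha<1-\varepsilon$ reduces precisely to the Forelli--Rudin estimate; so this is a fillable computation rather than a flaw, but as submitted the proof is not finished at that point. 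Second, since $\overline{\phi}$ need not be bounded, $H_{\overline{\phi}}$ is a priori only densely defined (say on bounded holomorphic functions), so the integral representation, the Bloch-norm bound, and the norm convergence $H_{\overline{\phi_r}}\to H_{\overline{\phi}}$ should be stated on that dense domain and then extended; this is routine and matches how the theorem is formulated in the literature. The remaining ingredients are fine: each $\overline{\phi_r}$ is continuous on $\overline{\mathbb{D}}$, so $H_{\overline{\phi_r}}$ is compact (the planar fact the paper itself cites from \cite{Sahutoglu12}), and $\phi_r\to\phi$ in the Bloch seminorm exactly when $\phi$ is in the little Bloch space.
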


The space of holomorphic functions satisfying the condition 
in the theorem is called little Bloch space. One can check 
that $\phi(z) = \exp((z+1)/(z-1))$ is bounded on $\mathbb{D}$ 
but it does not belong to the little Bloch space. Hence not every 
bounded symbol that is smooth on the domain produces compact 
Hankel  operator on the disc. However, Hankel operators with symbols 
continuous on the closure are compact for bounded domains in $\C$ 
(see, for instance, \cite[Proposition 1]{Sahutoglu12}).  We refer the 
reader to \cite{ZhuBook} for more  information on the theory of  
Hankel operators (as well as Toeplitz  operators) on the Bergman 
space of the unit disc. We note that Sheldon Axler's result has been 
extended to a small class of domains in $\C^n$, such as strongly 
pseudoconvex domains, by Marco Peloso \cite{Peloso94} and 
Huiping Li \cite{Li94}.

The situation in $\C^n$ for $n\geq 2$ is radically different. For instance, 
$H_{\zb_1}$ is not compact when $\D$ is the bidisc (see, for instance,  
\cite{Le10,ClosThesis,ClosSahutoglu18,Clos17}).  Hence in higher 
dimensions compactness of Hankel operators is not guaranteed 
even if the symbol is smooth up to the boundary. We refer the 
reader to \cite{StraubeBook,HaslingerBook} for more information 
about Hankel operators in higher dimensions and their relations 
to $\dbar$-Neumann problem.

We are interested in studying compactness of Hankel operators on 
Bergman spaces defined on domains in $\C^n$. We would like to 
understand  compactness of Hankel operators in terms of the interaction 
of the  symbol with the boundary geometry. This interaction does not 
surface for domains in $\C$ as the boundary has no complex geometry. 
However, to relate the symbol to the boundary geometry we will restrict 
ourselves to symbols that are at least continuous up to the boundary. 
The first results in this direction are due to \v{Z}eljko \v{C}u\v{c}kovi\'c 
and the third author in \cite{CuckovicSahutoglu09}. They obtain results  
about compactness of Hankel operators in terms of the behavior of the 
symbols along analytic discs in the boundary, on smooth bounded 
pseudoconvex domains (with a restriction on the Levi form) 
and on smooth bounded convex domains in $\C^n$. Moreover, 
for convex domains in $\C^2$ they obtain a characterization 
for compactness (see \cite[Corollary 2]{CuckovicSahutoglu09}).
We note that even though they state their results for $C^{\infty}$-smooth 
domains and symbols, observation of the proofs shows that only 
$C^1$-smoothness is sufficient. One of their results, stated  with $C^1$ 
regularity, is the following theorem. 
   
\begin{theorem*}[\v{C}u\v{c}kovi\'c-\c{S}ahuto\u{g}lu]
Let $\D$ be a $C^1$-smooth bounded convex domain in $\C^2$ 
and $\phi\in C^1(\Dc)$. Then $H_{\phi}$ is compact if and only if 
$\phi\circ f$ is holomorphic for any holomorphic $f:\mathbb{D}\to b\D$.	
\end{theorem*}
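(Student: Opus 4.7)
The proof splits into the two implications. Necessity proceeds by contrapositive: given a holomorphic disc along which $\phi$ fails to be holomorphic, one builds a weakly null normalized sequence in $A^2(\D)$ on which $H_\phi$ does not vanish. Sufficiency uses Kohn's identity to rewrite $H_\phi$ in terms of $\dbar\phi$ and the $\dbar$-Neumann operator, localizes on $\overline{\D}$, and exploits the rigidity that convexity imposes on boundary discs.

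\textbf{Necessity.} Assume there is a holomorphic $f:\mathbb{D}\to b\D$ with $\phi\circ f$ not holomorphic, so $(\partial/\partial\overline{\zeta})(\phi\circ f)(\zeta_0)\neq 0$ at some $\zeta_0\in\mathbb{D}$. After a M\"obius reparametrization take $\zeta_0=0$, and after an affine change of coordinates in $\C^2$ arrange $f(0)=0$ with real normal to $b\D$ in the $\partial/\partial\mathrm{Re}\,z_2$ direction. Applying the maximum principle to $L\circ f$ for a $\C$-linear functional $L$ supporting $\D$ at $0$ forces $f(\mathbb{D})\subset\{z_2=0\}\cap b\D$ and $\D\subset\{\mathrm{Re}\,z_2\leq 0\}$. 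Now construct unit-norm holomorphic test functions $g_k\in A^2(\D)$ that concentrate their mass in an $O(1/k)$-slab adjacent to the disc---for instance, normalized Bergman kernels $K(\cdot,p_k)/\sqrt{K(p_k,p_k)}$ for a sequence $p_k\to 0$, or explicit holomorphic peak functions built from $z_2$ together with a holomorphic localizer in $z_1$. These converge weakly to zero in $A^2(\D)$. Bound
\begin{equation*}
\|H_\phi g_k\|^2 \;\geq\; \frac{|\langle \phi g_k,\, w\rangle_{L^2(\D)}|^2}{\|w\|_{L^2(\D)}^2}
\end{equation*}
for a judiciously chosen $w\in L^2(\D)\ominus A^2(\D)$ designed to isolate the component $\partial\phi/\partial\overline{z}_1$ along the disc. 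The non-vanishing of $\dbar(\phi\circ f)(0)$ then yields a positive, $k$-independent lower bound, contradicting the compactness of $H_\phi$.

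\textbf{Sufficiency.} Assume $\phi\circ f$ is holomorphic for every holomorphic $f:\mathbb{D}\to b\D$. Use Kohn's identity $H_\phi u=\dbar^\ast N(u\,\dbar\phi)$ for $u\in A^2(\D)$, where $N$ is the $\dbar$-Neumann operator on $(0,1)$-forms, reducing compactness of $H_\phi$ to compactness of $u\mapsto \dbar^\ast N(u\,\dbar\phi)$. Localize on $\overline{\D}$ by a partition of unity. On a neighborhood of a strongly pseudoconvex boundary point $N$ is compact, so the associated localized operator is compact for any continuous $\phi$. Near a weakly pseudoconvex point $p$, convexity in $\C^2$ supplies a nontrivial affine analytic disc $\Delta_p\subset b\D$ through $p$ whose complex-tangent direction spans the kernel of the Levi form at $p$; applied to a holomorphic parametrization of $\Delta_p$, the hypothesis gives $\dbar(\phi|_{\Delta_p})=0$, so the disc-tangential component of $\dbar\phi$ vanishes identically on $\Delta_p$. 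By $C^1$-continuity, this component is uniformly small on a neighborhood of $\Delta_p$ in $\D$. Combine this with the normal-direction regularity of $N$ on convex domains (property $(\widetilde{P})$-type estimates of McNeal--Straube) to conclude that the full operator is compact.

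\textbf{Main obstacle.} The difficult direction is sufficiency, because at weakly pseudoconvex boundary points $N$ itself is \emph{not} compact, so compactness of $H_\phi$ cannot simply be inherited from $N$. The argument must exploit the exact alignment between the tangential direction in which $N$ fails to be compact (the kernel of the Levi form, which is spanned by the tangent to the boundary disc) and the direction in which the hypothesis forces $\dbar\phi$ to vanish, and then promote the pointwise vanishing of this tangential component of $\dbar\phi$ into a quantitative smallness estimate sufficient to cancel the loss of compactness of $N$ in that direction.
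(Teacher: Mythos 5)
First, a point of context: the paper does not prove this theorem at all --- it is quoted from \cite{CuckovicSahutoglu09} as background. What the paper itself proves (Theorems \ref{ThmC2} and \ref{ThmConvex}) is only the \emph{necessity} half, under weaker hypotheses (continuous symbols, Lipschitz pseudoconvex or convex domains), and it does so by a concrete mechanism: localize compactness of $H_{\phi}$ near the disc (Lemma \ref{LemLocalization}), flatten the disc, take an explicit weakly null sequence built from negative powers of $z_2$ (e.g.\ $f_j=\alpha_j(z_2-i\delta_j)^{-n+1}$), and extract $\phi_{\zb_1}$ along the disc through the distributional identity $(H_{\phi}f)_{\zb_1}=f\,\phi_{\zb_1}$ paired against $h_{z_1}$ for $h\in C^{\infty}_0$ supported in the disc. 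Your necessity sketch has the right overall shape (weakly null sequence concentrating at the disc, pairing against an element orthogonal to $A^2$), but it stops short of the actual mechanism: you never exhibit the function $w$, never explain how non-vanishing of $\dbar(\phi\circ f)$ at one point produces a $k$-independent lower bound, and the suggestion to use normalized Bergman kernels $K(\cdot,p_k)/\sqrt{K(p_k,p_k)}$ is problematic, since near a boundary point lying on an analytic disc of a convex domain the kernel's asymptotics are exactly what one does not control; the known proofs (including this paper's) deliberately avoid Bergman kernels in favor of the explicit $z_2$-based functions, which spread mass over the whole disc so that the $z_1$-pairing can see $\phi_{\zb_1}$ on a fixed set.

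The more serious gap is in your sufficiency direction, which is the genuinely hard half of the \v{C}u\v{c}kovi\'c--\c{S}ahuto\u{g}lu theorem. You correctly identify the obstacle (Kohn's formula $H_{\phi}u=\dbar^{*}N(u\,\dbar\phi)$ transfers nothing at weakly pseudoconvex points because $N$ is not compact there, and the hypothesis only kills the component of $\dbar\phi$ tangent to the boundary discs), but the final step --- ``combine this with the normal-direction regularity of $N$ \ldots to conclude that the full operator is compact'' --- is an assertion of precisely the statement to be proved, not an argument. To close it one needs a quantitative compactness estimate of the form $\|H_{\phi}u\|^2\leq \ep\|u\|^2+C_{\ep}\|u\|_{-1}^2$, and obtaining it requires: a uniform treatment of the full (possibly uncountable) family of affine discs filling the infinite-type portion of $b\D$, not a neighborhood of a single disc $\Delta_p$; patching at the ends of the discs where they meet finite-type points; and an estimate showing that smallness of only the bad-direction coefficient of $u\,\dbar\phi$ suffices, which rests on the special structure of convex boundaries in $\C^2$ (the Levi-degenerate set is a union of affine discs, and appropriate subsets satisfy property $(P)$ in the sense of Catlin/Fu--Straube). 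None of this is routine, and invoking McNeal--Straube ``property $(\widetilde{P})$-type estimates'' by name does not supply it. Also note a smaller slip: continuity of the tangential component of $\dbar\phi$ gives smallness near a compact subset of a disc only after a covering argument, and the constant must be uniform over all discs simultaneously. As it stands, the proposal establishes neither direction completely, and the sufficiency half is missing its central estimate.
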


The theorem above can be interpreted as follows: $H_{\phi}$ is compact 
if and only if  $\phi$ is ``holomorphic along" every non-trivial analytic 
disc in the boundary. 

The situation for symbols that are only continuous up to the boundary 
is less understood. When $\D$ is a bounded convex domain in $\C^n$ 
with no non-trivial discs in $b\D$ (that is, any holomorphic mapping 
$f:\mathbb{D}\to b\D$ is constant) all of the Hankel operators with 
symbols continuous on $\Dc$  are compact. This follows from the 
following two facts: on such domains the $\dbar$-Neumann operator 
is compact (see \cite{FuStraube98}); compactness of the $\dbar$-Neumann 
operator implies that Hankel operators with symbols continuous 
on closure  are compact (see \cite[Proposition 4.1]{StraubeBook}). 

 In case of the polydisc Trieu Le in \cite{Le10} proved the 
following  characterization. 

\begin{theorem*}[Le]
Let $\phi$ be continuous on $\overline{\mathbb{D}^n}$ for $n\geq 2$. 
Then $H_{\phi}$ is compact if and only if there exist 
$\phi_1,\phi_2\in C(\overline{\mathbb{D}^n})$ such that  
$\phi_1$ is holomorphic on $\mathbb{D}^n,  \phi_2=0$ on 
$b\mathbb{D}^n$, and $\phi=\phi_1+\phi_2$.
\end{theorem*}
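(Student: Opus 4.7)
\emph{Sufficiency.} Suppose $\phi=\phi_1+\phi_2$ as in the statement. Since $\phi_1$ is holomorphic on $\mathbb{D}^n$, multiplication by $\phi_1$ preserves $A^2(\mathbb{D}^n)$, so $H_{\phi_1}=0$. For $\phi_2\in C(\overline{\mathbb{D}^n})$ vanishing on $b\mathbb{D}^n$, I would uniformly approximate it by $\psi_k\in C_c(\mathbb{D}^n)$. The multiplication $M_{\psi_k}:A^2(\mathbb{D}^n)\to L^2(\mathbb{D}^n)$ is compact---bounded sequences in $A^2(\mathbb{D}^n)$ have locally uniformly convergent subsequences by Montel's theorem, and $\psi_k$ has compact support---so $H_{\psi_k}=(I-P)M_{\psi_k}$ is compact, and $\|H_{\phi_2}-H_{\psi_k}\|\le\|\phi_2-\psi_k\|_\infty\to 0$ forces $H_{\phi_2}$, and hence $H_\phi$, to be compact.

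\emph{Necessity, Step 1.} Assume $H_\phi$ is compact. The plan is first to show that $\phi\circ f$ is holomorphic for every holomorphic $f:\mathbb{D}\to b\mathbb{D}^n$, and then to construct $\phi_1$. For Step 1, consider a representative one-dimensional analytic disc $\Delta\subset b\mathbb{D}^n$, say $\Delta=\{\zeta_1\}\times\mathbb{D}\times\{\zeta_3\}\times\cdots\times\{\zeta_n\}$ with $|\zeta_j|=1$ for $j\ne 2$, and test $H_\phi$ against functions of the form $f_{w,g}(z)=k_{w_1}(z_1)\,g(z_2)\,k_{w_3}(z_3)\cdots k_{w_n}(z_n)$, where $k_{w_j}$ is the normalized Bergman kernel of $\mathbb{D}$ with $w_j\to\zeta_j$ and $g\in A^2(\mathbb{D})$ is arbitrary. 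These sequences tend to $0$ weakly in $A^2(\mathbb{D}^n)$, so $\|H_\phi f_{w,g}\|_{L^2}\to 0$. Pairing against suitable antiholomorphic test functions in $z_2$ extracts the $\overline{z}_2$-Taylor coefficients of $\phi(\zeta_1,z_2,\zeta_3,\ldots,\zeta_n)$ and shows they vanish, so this boundary restriction is holomorphic in $z_2$. Running the argument over all choices of face and disc yields the full ``holomorphic along discs'' conclusion.

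\emph{Necessity, Step 2 and main obstacle.} Next, expand $\phi|_{\mathbb{T}^n}$ in Fourier series $\sum_{\alpha\in\mathbb{Z}^n}c_\alpha e^{i\alpha\cdot\theta}$; Step 1 applied to discs of the form $\{z_j=e^{i\theta_j}\}_{j\ne k}$ in $b\mathbb{D}^n$ forces $c_\alpha=0$ whenever $\alpha\notin\mathbb{N}^n$. Define
\[
\phi_1(z):=\frac{1}{(2\pi)^n}\int_{\mathbb{T}^n}\phi(e^{i\theta})\prod_{j=1}^n\frac{d\theta_j}{1-z_j e^{-i\theta_j}},
\]
the Cauchy--Szeg\H{o} extension of $\phi|_{\mathbb{T}^n}$, which is holomorphic on $\mathbb{D}^n$ with Taylor series $\sum_{\alpha\in\mathbb{N}^n}c_\alpha z^\alpha$. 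The main obstacle is verifying that $\phi_1\in C(\overline{\mathbb{D}^n})$ and that $\phi_1$ matches $\phi$ on all of $b\mathbb{D}^n$, not merely on the distinguished boundary $\mathbb{T}^n$; this requires combining the continuity of $\phi$ with holomorphicity along lower-dimensional discs in each face to show that the Cauchy integral extends continuously to, and agrees with $\phi$ on, the flat parts of $b\mathbb{D}^n$. Once this is established, $\phi_2:=\phi-\phi_1$ lies in $C(\overline{\mathbb{D}^n})$ and vanishes on $b\mathbb{D}^n$, completing the decomposition.
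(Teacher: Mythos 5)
First, a point of reference: this statement is quoted background in the paper (it is Le's theorem, cited as \cite{Le10}); the paper contains no proof of it, so your argument can only be judged on its own terms. Your sufficiency direction is complete and correct: $H_{\phi_1}=0$ because a bounded holomorphic $\phi_1$ multiplies $A^2(\mathbb{D}^n)$ into itself, and the uniform approximation of $\phi_2$ by compactly supported continuous functions, together with the Montel-type argument showing $M_{\psi_k}:A^2(\mathbb{D}^n)\to L^2(\mathbb{D}^n)$ is compact and $\|H_{\phi_2}-H_{\psi_k}\|\le\|\phi_2-\psi_k\|_\infty$, does give compactness of $H_{\phi_2}$.

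The necessity direction, however, has two genuine gaps, and they sit exactly where the real work lies. (a) In Step 1, the sentence about pairing with antiholomorphic test functions to ``extract the $\overline{z}_2$-Taylor coefficients of $\phi(\zeta_1,z_2,\ldots,\zeta_n)$'' is not meaningful as written: $\phi$ is merely continuous, so the slice has no such coefficients until its holomorphy is already established, and the sketch never explains how the weak-null sequence $f_{w,g}$ transfers information from $H_\phi$ to the boundary slice $\phi_\Delta(z_2):=\phi(\zeta_1,z_2,\zeta_3,\ldots,\zeta_n)$. The missing mechanism, which uses the product structure of the polydisc in an essential way, is an inequality of the form $\|H_\phi u\|\ge\|(I-P_{(2)})(\phi u)\|$, where $P_{(2)}$ is the Bergman projection applied in the $z_2$-variable only (its range contains $A^2(\mathbb{D}^n)$, so the distance is smaller), combined with the concentration $|k_{w_j}|^2\,dA\to\delta_{\zeta_j}$ and uniform continuity of $\phi$ to get $\|H_{\phi_\Delta}g\|_{L^2(\mathbb{D})}\le\liminf\|H_\phi f_{w,g}\|=0$ for every $g\in A^2(\mathbb{D})$; then $H_{\phi_\Delta}=0$ forces $\phi_\Delta$ holomorphic. (b) In Step 2 you explicitly leave the ``main obstacle'' unresolved, and that obstacle is precisely the construction of the decomposition: you must prove $\phi_1\in C(\overline{\mathbb{D}^n})$ and $\phi_1=\phi$ on all of $b\mathbb{D}^n$, not just on $\mathbb{T}^n$. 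This can be closed, but it has to be argued: since $c_\alpha=0$ for $\alpha\notin\mathbb{N}^n$, your Cauchy integral coincides with the $n$-fold Poisson integral of $\phi|_{\mathbb{T}^n}$, which is continuous on $\overline{\mathbb{D}^n}$ and whose value at a face point (some coordinates in $\mathbb{D}$, the rest unimodular) is the partial Poisson integral in the free variables; by Step 1 the corresponding slice of $\phi$ is holomorphic in the free variables and continuous up to the closure, hence equals the Poisson integral of its own distinguished-boundary values, so $\phi_1=\phi$ on every face. As submitted, with (a) only gestured at and (b) acknowledged but not carried out, the necessity half is an outline rather than a proof.
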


A domain $\D\subset \C^n$ is called Reinhardt if $(z_1,\ldots, z_n)\in \D$ 
implies that $(e^{i\theta_1}z_1,\ldots, e^{i\theta_n}z_n)\in \D$ for 
any $\theta_1,\ldots,\theta_n\in \mathbb{R}$. That is, Reinhardt domains 
are invariant under rotation in each variable. These are generalizations 
of the ball and the polydisc. Reinhardt domains are useful in describing 
domain of convergence for power series centered at the origin (see, 
for instance, \cite{KrantzBook,NarasimhanBook,RangeBook}).

Motivated by the previous results mentioned above, recently, the first 
and the last authors proved the following result on convex Reinhardt 
domains in $\C^2$ (see \cite{ClosSahutoglu18}), generalizing the results in 
\cite{CuckovicSahutoglu09} (in terms of regularity of the symbol but 
on a small class of domains) and \cite{Le10} (in terms of the domain 
in $\C^2$).

\begin{theorem*}[Clos-\c{S}ahuto\u{g}lu]
Let $\D$ be a bounded convex Reinhardt domain in $\C^2$ and 
$\phi\in C(\Dc)$. Then $H_{\phi}$ is compact if and only if 
$\phi\circ f$ is holomorphic for any holomorphic $f:\mathbb{D}\to b\D$.	
\end{theorem*}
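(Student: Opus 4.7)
The plan is to establish the equivalence in two steps, exploiting the toral Reinhardt action together with the convexity of $\D\subset\C^2$.

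For the \emph{necessity} direction (compact $\Rightarrow$ holomorphic along discs), I would argue contrapositively. Assuming $\phi\circ f$ fails to be holomorphic for some non-constant holomorphic $f:\mathbb{D}\to b\D$, the goal is to construct a bounded sequence $\{g_k\}\subset A^2(\D)$ that tends to $0$ weakly in $A^2(\D)$ while $\liminf_k\|H_\phi g_k\|_{L^2(\D)}>0$. Since $\D$ is convex and Reinhardt, every non-constant analytic disc in $b\D$ lies in an affine complex line inside a flat face of $b\D$, and one can slide this disc inward to produce a one-parameter family of interior discs converging to $f(\mathbb{D})$. Testing $H_\phi$ on functions that are Bergman-kernel-like on these interior discs, then changing variables to pull back to the unit disc, reduces the non-compactness question to a one-variable Hankel estimate that is violated precisely when $\phi\circ f$ is not holomorphic.

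For the \emph{sufficiency} direction, I would expand $\phi$ as a Fourier series in the angular variables of the Reinhardt action, $\phi(r_1e^{i\theta_1},r_2e^{i\theta_2})=\sum_{m,n\in\mathbb{Z}}a_{m,n}(r_1,r_2)e^{i(m\theta_1+n\theta_2)}$. Since the boundary discs of $\D$ are themselves compatible with the torus action, the hypothesis decouples across Fourier modes: each coefficient $a_{m,n}$ associated with a non-holomorphic monomial is forced to vanish on the corresponding part of $b\D$, while the contributions from holomorphic monomials can be absorbed into a holomorphic part. I would then show that $\phi$ can be uniformly approximated on $\Dc$ by symbols of the form $\phi_1+\phi_2$, where $\phi_1\in C(\Dc)$ is holomorphic on $\D$ and $\phi_2\in C(\Dc)$ vanishes on $b\D$. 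Since $H_{\phi_1}=0$ and Hankel operators with continuous symbols vanishing on $b\D$ are compact (a standard consequence of \cite[Proposition 4.1]{StraubeBook}), the uniform approximation yields compactness of $H_\phi$.

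The main obstacle is the sufficiency direction, specifically producing the decomposition $\phi\approx\phi_1+\phi_2$ from the pointwise condition when $\phi$ is only continuous on $\Dc$, not smooth. The quantitative issue is uniformity across the Fourier indices $(m,n)$: one needs the approximating $\phi_1,\phi_2$ to remain in $C(\Dc)$ after summation, not merely mode-by-mode. The Reinhardt symmetry reduces the catalog of non-constant boundary discs to a manageable family, and this is precisely what makes the argument tractable in this setting while resisting extension to general convex or pseudoconvex domains (the situation the current paper addresses by instead proving only the necessity direction in greater generality).
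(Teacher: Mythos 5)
Your necessity sketch is broadly consistent with how this direction is actually handled (and with what the present paper proves in greater generality): reduce to an affine disc in a flat boundary face via convexity, localize, and defeat compactness by testing on a weakly null sequence concentrating along the disc. Be aware, though, that the real technical content lies in two points you pass over: a localization lemma for compactness of $H_\phi$ with merely bounded symbols, and a mollification argument (since $\phi$ is only continuous, one cannot differentiate it; the paper replaces $\phi$ by smoothings $\phi_k$ of its restriction to the disc and runs a quantitative lower bound through the identity $(H_{\phi_k}f)_{\overline{z}_1}=f(\phi_k)_{\overline{z}_1}$). The explicit sequence used is of the form $f_j=\alpha_j/z_2^{\beta_j}$ (singular along the face, normalized on a wedge), not a sliding family of Bergman-kernel-type functions, but that is a difference of implementation rather than of principle.

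The sufficiency half of your proposal, however, contains a genuine error. The claim that the disc condition lets you uniformly approximate $\phi$ on $\Dc$ by $\phi_1+\phi_2$ with $\phi_1\in C(\Dc)$ holomorphic on $\D$ and $\phi_2\in C(\Dc)$ vanishing on $b\D$ is a Le-type (polydisc) decomposition, and it is false on general convex Reinhardt domains. Take $\D=B(0,1)\subset\C^2$ and $\phi(z)=\overline{z}_1$. The sphere contains no analytic discs, so your hypothesis is vacuously satisfied (and indeed $H_{\overline{z}_1}$ is compact, since the $\dbar$-Neumann operator is compact on the ball), yet no such approximation exists: the functional $\psi\mapsto\int_{b\D}\psi\, z_1\, d\sigma$ is continuous for the sup norm, annihilates every $\phi_2$ vanishing on $b\D$ and every $\phi_1$ in the ball algebra (mean value property gives $\int_{b\D}\phi_1 z_1\,d\sigma=\phi_1(0)\cdot 0=0$), but evaluates to $\int_{b\D}|z_1|^2\,d\sigma>0$ on $\overline{z}_1$. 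For the same reason your intermediate Fourier-mode claim (``each coefficient $a_{m,n}$ attached to a non-holomorphic monomial is forced to vanish on the corresponding part of $b\D$'') cannot follow from the disc condition, which imposes no constraint at all when $b\D$ has no discs. So the sufficiency direction needs a different mechanism than reduction to Le's polydisc characterization; note also that the paper you are reading does not prove this direction at all --- it quotes the equivalence from the Clos--\c{S}ahuto\u{g}lu paper and only establishes the necessity implication (for convex domains in $\C^n$ and Lipschitz pseudoconvex domains in $\C^2$), explicitly leaving the converse open beyond the Reinhardt setting.
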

 
We note that on piecewise smooth bounded convex Reinhardt domains 
in $\C^2$, the first author studied compactness of Hankel operators 
with conjugate holomorphic square integrable functions  in \cite{Clos17}. 
Furthermore, compactness of products of two Hankel operators with 
symbols continuous up to the boundary was studied by     
\v{Z}eljko \v{C}u\v{c}kovi\'c and the last author 
in \cite{CuckovicSahutoglu14}. 

In this paper we are able to partially generalize the result of 
Clos-\c{S}ahuto\u{g}lu to more general domains. In case the 
domain is in $\C^2$ we have the following result. 

\begin{theorem}\label{ThmC2}
Let $\D$ be a bounded pseudoconvex domain in $\mathbb{C}^2$ 
with Lipschitz boundary and $\phi\in C(\Dc)$ such that $H_{\phi}$ 
is compact on $A^2(\D)$. Then $\phi\circ f$ is holomorphic for any 
holomorphic $f:\mathbb{D}\rightarrow b\D$.  	
\end{theorem}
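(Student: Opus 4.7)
I argue by contradiction: assume $f:\mathbb{D}\to b\D$ is a non-constant holomorphic disc and $\phi\circ f$ is not holomorphic. Since $A^2(\mathbb{D})$ is a closed subspace of $L^2(\mathbb{D})$ and $\phi\circ f\in C(\overline{\mathbb{D}})\setminus A^2(\mathbb{D})$, there exist an open subdisc $D:=D(w_0,r)\subset\mathbb{D}$ and $\delta>0$ with
\[
\inf_{h\in A^2(D)}\int_{D}|(\phi\circ f)(w)-h(w)|^2\,dA(w)\;\geq\;\delta.
\]
Set $p:=f(w_0)\in b\D$. After an affine change of coordinates, in a neighborhood $U$ of $p$ one has $\D\cap U=\{(z_1,z_2)\in U:\mathrm{Re}\,z_2<\rho(z_1,\mathrm{Im}\,z_2)\}$ with $\rho$ Lipschitz, the inward direction being $-\partial/\partial\mathrm{Re}\,z_2$. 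Since $f$ is a non-constant holomorphic disc into $b\D$, after a reparametrization we may write $f(w)=(w,F(w))$ near $w_0$ with $F$ holomorphic. Lipschitz regularity then guarantees that $w\mapsto(w,F(w)-\varepsilon)$ lies in $\D$ for all sufficiently small $\varepsilon>0$ and $w$ near $w_0$. The plan is to construct $\{g_n\}\subset A^2(\D)$ with $g_n\rightharpoonup 0$ weakly and $\|H_{\phi}g_n\|_{L^2(\D)}$ bounded below, contradicting compactness.

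\textbf{Test functions via $\dbar$-correction.} Fix $\eta\in C_c^\infty(D)$ with $\eta(w_0)\neq 0$. For $\varepsilon_n\to 0^+$, choose holomorphic bumps $\psi_n$ on $\C$ (for instance of the form $C_n/(\zeta-\varepsilon_n)^{N_n}$ with $N_n$ large and $C_n$ normalizing, so that $\psi_n$ is holomorphic on $\{\mathrm{Re}\,\zeta<\varepsilon_n\}$ and has $L^2$-mass concentrated in $\{|\zeta|\asymp\varepsilon_n\}\cap\{\mathrm{Re}\,\zeta<0\}$). Set
\[
\chi_n(z_1,z_2):=\eta(z_1)\,\psi_n\!\bigl(z_2-F(z_1)\bigr),\qquad \|\chi_n\|_{L^2(\D)}\asymp 1.
\]
Because $\psi_n$ and $F$ are holomorphic, $\dbar\chi_n=(\dbar\eta)(z_1)\,\psi_n(z_2-F(z_1))$, which is localized on the pushed-in tube. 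H\"ormander's $L^2$-estimate for $\dbar$ on the bounded pseudoconvex domain $\D$ (an estimate not requiring boundary regularity) provides $u_n\in L^2(\D)\cap A^2(\D)^\perp$ with $\dbar u_n=\dbar\chi_n$ and a controlled $L^2$-norm; the key point is that by suitable weights $u_n$ is of lower $L^2$-order near $f(D)$ than $\chi_n$. Set $g_n:=\chi_n-u_n\in A^2(\D)$; then $\|g_n\|_{L^2(\D)}\asymp 1$, and $g_n\rightharpoonup 0$ weakly in $A^2(\D)$ because $\mathrm{supp}\,\chi_n$ drifts to $f(D)\subset b\D$.

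\textbf{Contradicting compactness.} On $\mathrm{supp}\,\chi_n$, continuity of $\phi$ yields $\phi(z_1,z_2)=(\phi\circ f)(z_1)+o(1)$ as $n\to\infty$. Using
$\|H_{\phi}g_n\|_{L^2(\D)}^2=\inf_{h\in A^2(\D)}\|\phi g_n-h\|_{L^2(\D)}^2$
and a Fubini-style argument on the tube (for fixed $z_1$, integrate in $z_2$ over the annular concentration region of $\psi_n$, exploiting that $\psi_n$ depends only on $z_2-F(z_1)$), the infimum over $h\in A^2(\D)$ reduces to one over $A^2(D)$ in the $z_1$-variable, giving
\[
\liminf_{n\to\infty}\|H_{\phi}g_n\|_{L^2(\D)}^2 \;\geq\; c_0\cdot\inf_{\tilde h\in A^2(D)}\int_D|(\phi\circ f)(w)-\tilde h(w)|^2|\eta(w)|^2\,dA(w)\;>\;0
\]
for some $c_0>0$, contradicting $\|H_{\phi}g_n\|_{L^2(\D)}\to 0$.

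\textbf{Main obstacle.} The hardest step is the construction in the second paragraph: one must match $\chi_n$ against its $\dbar$-correction $u_n$ so that the holomorphic projection $g_n$ remains genuinely concentrated on the pushed-in tube around $f(D)$. On a general Lipschitz pseudoconvex $\D\subset\C^2$, the explicit structure (convexity or Reinhardt symmetry) that made the analogous constructions in \cite{CuckovicSahutoglu09,ClosSahutoglu18} tractable is unavailable; the argument must rest on the Lipschitz graph structure of $b\D$, the holomorphicity of $F$, and weighted $L^2$-estimates for $\dbar$. The Fubini-style reduction in the last step likewise requires care because the $z_2$-slices of $\D$ carry no good geometry a priori, and the approximation $\phi(z)\approx(\phi\circ f)(z_1)$ on the tube must be quantified in $L^2$ with a decay beating the normalization of $\chi_n$.
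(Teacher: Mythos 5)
Your overall strategy (a weakly null sequence concentrating along the boundary disc whose Hankel images stay bounded below) is the same in spirit as the paper's, but the two steps you yourself flag as delicate are genuine gaps, and the first one is the crux rather than a technicality. In the $\dbar$-correction step, the data $\dbar\chi_n=(\dbar\eta)(z_1)\,\psi_n(z_2-F(z_1))$ has $L^2$-norm comparable to $\|\chi_n\|_{L^2}\asymp 1$, since replacing $\eta$ by $\dbar\eta$ does not change the order of magnitude and the $z_2$-factor is the same. Hence H\"ormander's estimate only gives $\|u_n\|_{L^2(\D)}\lesssim 1$, i.e.\ the correction is of the \emph{same} size as $\chi_n$; nothing prevents $u_n$ from cancelling $\chi_n$ on the inner tube, or $g_n=\chi_n-u_n$ from being small, non-concentrated, or even zero. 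The usual weighted remedy (a plurisubharmonic weight blowing up near the disc, so that the solution is forced to be small there) is blocked here because $\dbar\chi_n$ is itself supported on the tube hugging the boundary disc, so any such weight inflates the data norm as much as it helps the solution; you assert ``by suitable weights $u_n$ is of lower $L^2$-order near $f(D)$ than $\chi_n$'' but give no mechanism, and on a general Lipschitz pseudoconvex domain none is apparent. The concluding Fubini-style step inherits this problem: the slice-wise lower bound needs the restriction $g_n(\cdot,z_2)$ to retain the product form $\eta(z_1)\times(\text{known factor})$, which is exactly what is lost once an uncontrolled $u_n$ is subtracted; moreover even for $\chi_n$ alone the holomorphic factor $\psi_n(z_2-F(z_1))$ varies in $z_1$ and could be small precisely where $\phi\circ f$ fails to be holomorphic, so ``the infimum reduces to one over $A^2(D)$'' does not follow as stated.

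For comparison, the paper's proof is organized so that no $\dbar$-solution with decay is ever needed. It first localizes compactness of $H_\phi$ (a lemma valid for merely bounded symbols), then straightens the disc by a local biholomorphism (compactness of the localized operator is invariant by Bell's transformation formula), and uses the Lipschitz hypothesis only to trap the localized domain between products $\Gamma_1\times W_{t_1,\theta_1}\subset\D_1\subset\Gamma_2\times W_{t_2,\theta_2}$ with a wedge $W$ in the $z_2$-plane avoiding $z_2=0$. The concentrating family is then explicit and already holomorphic, $f_j=\alpha_j z_2^{-(1-1/j)}$, normalized on the inner wedge; and instead of approximating $\phi$ on the tube, the symbol is mollified in $z_1$ only (trivially extended in $z_2$), so that the identity $(H_{\phi_k}Rf)_{\zb_1}=R\bigl(f(\phi_k)_{\zb_1}\bigr)$ plus a pairing with a fixed $h\in C^\infty_0(\Gamma_1)$ (Lemma \ref{LemConv}) yields the uniform lower bound $\delta/\|h_{z_1}\|_{\Gamma_1}\le\|H^{\D_1}_{\phi_k}Rf_j\|$, after which $\|H_{\phi_k-\phi}Rf_j\|\to 0$ transfers the bound to $\phi$. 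If you want to salvage your construction, you would need either a proof that a definite fraction of $\chi_n$ survives projection to $A^2(\D)$ near the disc (which is essentially as hard as the theorem) or a device like the paper's wedge structure that produces holomorphic concentrators directly.
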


However, for convex domains we can prove the following 
result in $\C^n$.  
\begin{theorem}\label{ThmConvex}
Let $\D$ be a bounded convex domain in $\C^n$ and 
$\phi\in C(\Dc)$ such that $H_{\phi}$ is compact on 
$A^2(\D)$. Then $\phi\circ f$ is holomorphic for any 
holomorphic  $f:\mathbb{D}\rightarrow b\D$.  	
\end{theorem}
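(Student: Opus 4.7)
The plan is to argue by contradiction. Suppose $H_\phi$ is compact yet $g := \phi\circ f$ is not holomorphic on $\mathbb{D}$ for some holomorphic $f:\mathbb{D}\to b\D$. By precomposing $f$ with a M\"obius automorphism of $\mathbb{D}$ we may assume the failure of the Cauchy--Riemann equation occurs near $0\in\mathbb{D}$, so that for some $k\ge 1$ and a positive-measure set of radii $\rho$ near some $\rho_0\in(0,1)$,
$$\widehat{g}(\rho,-k) := \frac{1}{2\pi}\int_0^{2\pi} g(\rho e^{i\theta})\,e^{ik\theta}\,d\theta \neq 0.$$
Our aim is to construct a sequence $\{h_j\}\subset A^2(\D)$ with $\|h_j\|_{L^2(\D)}=1$, $h_j\rightharpoonup 0$ weakly, and $\|H_\phi h_j\|_{L^2(\D)} \ge c > 0$; this will contradict compactness of $H_\phi$.

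Convexity is what lets us produce good peak functions. At $p := f(0)\in b\D$ choose a complex-linear supporting functional $\ell$ with $\ell(p)=0$ and $\mathrm{Re}\,\ell\le 0$ on $\D$. Since $\mathrm{Re}(\ell\circ f)$ is harmonic on $\mathbb{D}$, is $\le 0$, and attains $0$ at the interior point $0$, the maximum principle forces $\ell\circ f\equiv 0$, so $f(\mathbb{D})$ lies inside the complex hyperplane $\{\ell=0\}$. After an affine change of coordinates (which preserves Bergman structure and hence compactness of Hankel operators) we may assume $\ell(z)=z_n$, so $f(\mathbb{D})\subset\{z_n=0\}\cap b\D$ and $\D\subset\{\mathrm{Re}\,z_n<0\}$. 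We then take test functions of the form
$$h_j(z) = c_j\,p_j(z_1,\dots,z_{n-1})\,e^{j z_n},$$
where $p_j$ is a bounded holomorphic polynomial chosen to localize $|h_j|^2$ along the arc $\{|z_1|=\rho_0\}$ of $f(\mathbb{D})$, and $c_j$ normalizes to unit $L^2$-norm. Since $|e^{jz_n}|\le 1$ on $\D$ and this factor concentrates mass in the slab $\{-1/j\lesssim\mathrm{Re}\,z_n\le 0\}\cap\D$, the sequence $h_j$ tends to zero weakly in $A^2(\D)$ while its $L^2$-mass piles up on $f(\mathbb{D})$.

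For the lower bound we use the duality
$$\|H_\phi h_j\|_{L^2(\D)} = \sup\bigl\{ |\langle \phi h_j, v\rangle_{L^2(\D)}| : v\in L^2(\D),\ v\perp A^2(\D),\ \|v\|_{L^2(\D)}=1\bigr\},$$
pairing $H_\phi h_j$ with $v_j = d_j\,\overline{p_j(z_1,\dots,z_{n-1})}\,\overline{z_1}^{\,k}\,e^{jz_n}$ minus its Bergman projection (to secure $v_j\perp A^2(\D)$). Since $\phi$ is uniformly continuous on $\Dc$ and the joint support of $h_j\,\overline{v_j}$ shrinks toward $f(\mathbb{D})$, we may replace $\phi$ by $g\circ f^{-1}$ on this support up to an $o(1)$ error; the remaining integral evaluates, up to $o(1)$ and a nonzero normalizing constant, to $\widehat{g}(\rho_0,-k)$, which is bounded away from zero by hypothesis. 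The main obstacle is the rigorous execution of this step: verifying that the concentration rate of $|h_j|^2$ near $f(\mathbb{D})$ is fast enough that the $o(1)$ error from $\phi$ being only continuous (not $C^1$) is truly negligible, controlling the Bergman projection subtracted from $v_j$, and handling the case where $\{z_n=0\}\cap b\D$ properly contains $\overline{f(\mathbb{D})}$, which will require a more elaborate localizer $p_j$. Unlike in the $C^1$-smooth setting of \v{C}u\v{c}kovi\'c--\c{S}ahuto\u{g}lu, no $\dbar$-integration-by-parts is available; the argument must rest entirely on uniform continuity of $\phi$ together with the sharp boundary concentration afforded by the convex peak functions $e^{jz_n}$.
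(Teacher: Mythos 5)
Two steps of your outline fail as stated, before any technical execution. First, your detection of non-holomorphy is incorrect: a continuous, non-holomorphic $g=\phi\circ f$ can have every negative Fourier coefficient vanish on every circle (take $g(\xi)=|\xi|^2$), so you cannot conclude that $\widehat{g}(\rho,-k)\neq 0$ for some $k\geq 1$. Non-holomorphy of a merely continuous function has to be detected distributionally, by pairing $g$ against $\partial h/\partial \xi$ for $h\in C^{\infty}_0$; this is exactly what Lemma \ref{LemConv} does in the paper, applied to mollifications $\phi_k$ of the symbol along the disc. Second, your supporting-hyperplane argument only places $f(\mathbb{D})$ inside $\{z_n=0\}$; it does not make the disc affine. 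The paper first replaces the arbitrary disc by a non-trivial \emph{affine} disc in $b\D$ (Lemma \ref{LemNotHolo}, which rests on the convexity lemma of Fu--Straube/\v{C}u\v{c}kovi\'c--\c{S}ahuto\u{g}lu that the convex hull of $f(\mathbb{D})$ stays in $b\D$). Without that reduction your localizer $p_j$, the arc $\{|z_1|=\rho_0\}$, and the expression $g\circ f^{-1}$ (note $f$ need not be injective) are not well defined.

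The decisive gap, however, is the lower bound itself. Writing $v_j=w_j-Pw_j$ with $w_j=d_j\,\overline{p_j}\,\overline{z}_1^{\,k}e^{jz_n}$, your pairing is $\langle \phi h_j,w_j\rangle-\langle \phi h_j,Pw_j\rangle$, and the projection term is not an error term: when $\phi$ is holomorphic along the disc the first term is in general large and is cancelled precisely by $\langle \phi h_j,Pw_j\rangle$, so the entire content of the argument is hidden in $Pw_j$, which on a general convex domain you have no way to compute or estimate (on the polydisc or Reinhardt domains monomial symmetry does this for you, which is why those cases were already known). You flag this as the ``main obstacle'' but propose no mechanism to overcome it. In addition, the concentration claim is too weak: $|e^{jz_n}|^2=e^{2j\operatorname{Re}z_n}$ localizes only in $\operatorname{Re}z_n$, so the mass of $h_j\overline{w_j}$ may sit on all of $b\D\cap\{\operatorname{Re}z_n=0\}$, including points whose $\operatorname{Im}z_n$ is far from $0$, where uniform continuity of $\phi$ says nothing about $g$. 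The paper's proof is built to avoid both issues: after reducing to an affine disc $\Gamma\times\{0\}$, it takes $f_j=1/\bigl(j(z_n-i\delta_j)^{n-1}\bigr)$, normalized on a slice using Lemma \ref{LemFunc}, so the weight blows up only at $z_n=0$ rather than on the whole face; the lower bound comes from the identity $(H^{\D_1}_{\phi_k}Rf_j)_{\overline{z}_1}=R\bigl(f_j(\phi_k)_{\overline{z}_1}\bigr)$ followed by slice-wise integration by parts against $h\in C^{\infty}_0(\Gamma_1)$, so the Bergman projection is annihilated by $\partial_{\overline{z}_1}$ and never needs to be controlled; continuity (rather than $C^1$ regularity) of $\phi$ is handled by the mollification together with $\|(\phi_k-\phi)Rf_j\|\to 0$; and the contradiction with compactness of $H_\phi$ on $A^2(\D)$ is obtained through the localization Lemma \ref{LemLocalization}. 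Your proposal, as it stands, has no substitute for these steps.
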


As a corollary of Theorem \ref{ThmConvex} we obtain the 
following result for locally convexifiable domains in $\C^n$.

\begin{corollary}\label{Cor1}
Let $\D$ be a bounded locally convexifiable domain in $\C^n$ and 
$\phi\in C(\Dc)$ such that $H_{\phi}$ is compact on $A^2(\D)$. 
Then $\phi\circ f$ is holomorphic for any holomorphic  
$f:\mathbb{D}\rightarrow b\D$.  
\end{corollary}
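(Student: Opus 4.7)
The plan is to reduce Corollary~\ref{Cor1} to Theorem~\ref{ThmConvex} by localizing to convex pieces of $\Omega$. Let $f:\mathbb{D}\to b\Omega$ be holomorphic. Since holomorphicity is local, it suffices to show that $\phi\circ f$ is holomorphic in a neighborhood of an arbitrary $z_0\in\mathbb{D}$. Set $p=f(z_0)$. By local convexifiability, there are a neighborhood $U$ of $p$ and a biholomorphism $\Phi:U\to\Phi(U)\subset\mathbb{C}^n$ such that $\widetilde{\Omega}:=\Phi(\Omega\cap U)$ is a bounded convex domain (shrink $U$ if needed so $\Phi(U)$ is bounded). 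Choose $r>0$ with $f(\overline{D_r(z_0)})\subset U$; then $\tilde f:=\Phi\circ f|_{D_r(z_0)}$ is a holomorphic map $D_r(z_0)\to b\widetilde{\Omega}$, and $\tilde\phi:=\phi\circ\Phi^{-1}\in C(\overline{\widetilde{\Omega}})$.

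If one can show that $H_{\tilde\phi}$ is compact on $A^2(\widetilde{\Omega})$, then Theorem~\ref{ThmConvex} applied to the bounded convex domain $\widetilde{\Omega}$ gives that $\tilde\phi\circ\tilde f=\phi\circ f|_{D_r(z_0)}$ is holomorphic on $D_r(z_0)$. Since $z_0$ was arbitrary, this yields the claim that $\phi\circ f$ is holomorphic on $\mathbb{D}$.

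The main obstacle is therefore the transfer step: deducing compactness of $H_{\tilde\phi}$ on $A^2(\widetilde{\Omega})$ from compactness of $H_\phi$ on $A^2(\Omega)$ through the local biholomorphism. The map $\Phi:\Omega\cap U\to\widetilde{\Omega}$ yields a unitary isomorphism $U_\Phi:A^2(\widetilde{\Omega})\to A^2(\Omega\cap U)$ via $U_\Phi g=(g\circ\Phi)\det\Phi'$, but $A^2(\Omega\cap U)$ is not directly comparable to $A^2(\Omega)$. The natural bridge is a smooth cutoff $\chi$ compactly supported in $U$ with $\chi\equiv 1$ near $p$: for a bounded weakly null sequence $\{g_j\}\subset A^2(\widetilde{\Omega})$, form $G_j:=P_\Omega\bigl(\chi\cdot U_\Phi g_j\bigr)\in A^2(\Omega)$ (extended by zero to $\Omega\setminus U$), use compactness of $H_\phi$ to conclude $\|H_\phi G_j\|_{L^2(\Omega)}\to 0$, and pull this estimate back through $\Phi^{-1}$ to bound $\|H_{\tilde\phi}g_j\|_{L^2(\widetilde{\Omega})}$. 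The delicate point is that the commutator error terms involving $\dbar\chi$ must produce \emph{compact} rather than merely bounded contributions; this rests on interior regularity of the Bergman kernel of $\Omega$—which makes $[\chi,P_\Omega]$ compact on the relevant $L^2$ spaces—together with the fact that $\dbar\chi$ is supported away from the distinguished boundary point $p$ where the disc $\tilde f(D_r(z_0))$ concentrates.
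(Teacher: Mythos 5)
Your reduction has the right general shape (localize near $p$, convexify, and run the convex-case argument), but the step you yourself flag as the main obstacle is a genuine gap, and the way you propose to close it does not work. Compactness of the full Hankel operator $H_{\tilde\phi}$ on $A^2(\widetilde{\Omega})$ is not known to follow from compactness of $H_\phi$ on $A^2(\D)$, and your cutoff argument hinges on the claim that $[\chi,P_{\D}]$ is compact ``by interior regularity of the Bergman kernel.'' That claim is false in general: for $f\in A^2(\D)$ one has $[\chi,P_{\D}]f=\chi f-P_{\D}(\chi f)=H_\chi f$, so compactness of the commutator on $A^2(\D)$ is exactly compactness of the Hankel operator with symbol $\chi$ --- a boundary phenomenon, not an interior one. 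On the bidisc (which is convex, hence within the scope of the corollary) a real-valued cutoff $\chi$ that is $1$ near a boundary point $p$ lying on an analytic disc in the boundary and $0$ outside a small ball is non-constant along that disc, so $H_\chi$ is not compact (by Le's theorem, or by Theorem \ref{ThmConvex} itself). The observation that $\dbar\chi$ vanishes near $p$ does not rescue this, because the boundary disc through $p$ leaves the set where $\chi\equiv 1$ and crosses the support of $\dbar\chi$; for the same reason the error $(I-P_{\D})(\chi\, U_\Phi g_j)$ is a Hankel-type term you have no means to make small or compact.

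The paper's proof avoids ever needing full local compactness. Lemma \ref{LemLocalization} yields only the weaker statement that the restricted operator $H^{B(p,r_1)\cap \D}_{\phi}R_{r_2,r_1}$ is compact on $A^2(B(p,r_2)\cap \D)$; Lemma \ref{LemHoloInvariance} transports exactly this restricted compactness through the convexifying biholomorphism; and then one invokes the \emph{proof} of Theorem \ref{ThmConvex} rather than its statement: the test sequence $f_j$ constructed there lives in $A^2$ of the larger local piece, and the contradiction only uses compactness of $H^{\D_1}_{\phi}R$. Your proposal instead applies Theorem \ref{ThmConvex} as a black box, which forces the (unproven, and in this generality unavailable) transfer of full compactness to the convexified piece. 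Replacing that transfer with the restricted-operator localization of Lemmas \ref{LemLocalization} and \ref{LemHoloInvariance}, and rerunning the argument of the convex case with the restriction operator in place, is what actually closes the proof.
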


A domain $\D\subset \C^n$ is called complete Reinhardt if 
$(z_1,\ldots, z_n)\in \D$ and  $\xi_1,\ldots,\xi_n\in \C$ with 
$|\xi_j|\leq 1$ for all $j$ then  $(\xi_1z_1,\ldots, \xi_nz_n)\in \D$. 
We note that convex Reinhardt domains are complete Reinhardt 
but the converse is not true. 

As a second corollary we obtain the following result for pseudoconvex 
complete Reinhardt domains in $\C^2$. 

\begin{corollary}\label{Cor2}
Let $\D$ be a bounded pseudoconvex complete Reinhardt 
domain in $\C^2$ and $\phi\in C(\Dc)$ such that $H_{\phi}$ 
is compact on $A^2(\D)$. Then $\phi\circ f$ is holomorphic 
for any holomorphic  $f:\mathbb{D}\rightarrow b\D$. 
\end{corollary}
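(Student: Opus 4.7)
My plan is to deduce Corollary \ref{Cor2} from Theorem \ref{ThmC2}, the key step being to verify that a bounded pseudoconvex complete Reinhardt domain $\D\subset\C^2$ has Lipschitz boundary. Recall that pseudoconvexity of a Reinhardt domain amounts to convexity of the logarithmic shadow $L=\{(\log|z_1|,\log|z_2|):(z_1,z_2)\in\D,\ z_1z_2\neq 0\}\subset\mathbb{R}^2$, while completeness forces $L$ to be closed under componentwise decrease. Away from the coordinate axes, $b\D$ is foliated by $2$-tori parametrized by the convex curve $\partial L$, which is locally Lipschitz; so this portion of $b\D$ is locally Lipschitz as well.

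The delicate points are the axial ones; for definiteness take $(0,z_2^0)$ with $|z_2^0|=R_2:=\sup_{z\in\D}|z_2|$. I would split according to how $\partial L$ approaches the asymptote $y=\log R_2$. If $\partial L$ contains a horizontal segment there, then locally $b\D$ is a product of a disc in $z_1$ and a Lipschitz set in $z_2$, so the domain is Lipschitz. If instead $\partial L$ meets the asymptote only tangentially, I would argue that no non-trivial analytic disc in $b\D$ can pass through such an axis point: if $f=(f_1,f_2):\mathbb{D}\to b\D$ is holomorphic with $f_1(\zeta_0)=0$ for some $\zeta_0\in\mathbb{D}$, then $|f_2(\zeta_0)|=R_2=\|f_2\|_\infty$ forces $f_2$ constant by the maximum principle, and the tangential hypothesis together with completeness pins the fiber $\{z_1:(z_1,f_2)\in b\D\}$ down to $\{0\}$, whence $f$ is constant.

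I expect the main obstacle to be that these two subcases do not exhaust the possibilities one might worry about: a domain can be non-Lipschitz at an axial cusp while simultaneously admitting non-trivial discs on a flat piece of $\partial L$ away from the cusp (for example $\D=\{|z_1|^{1/2}+|z_2|<1\}\cap\{|z_1z_2|<c\}$ with $c>0$ small). In such a situation Theorem \ref{ThmC2} cannot be applied verbatim. My contingency would be to combine the classification of boundary discs---they lie on flat pieces of $\partial L$---with the techniques of \cite{ClosSahutoglu18}: use the $T^2$-symmetry to reduce to the Fourier components of $\phi$, and then exploit the monomial orthogonal basis of $A^2(\D)$ via normalized test functions $z_1^j z_2^k/\|z_1^j z_2^k\|_{A^2(\D)}$ concentrating near the flat piece containing $f(\mathbb{D})$ as $(j,k)\to\infty$ in the appropriate direction, to extract $\dbar(\phi\circ f)=0$.
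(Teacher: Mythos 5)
There is a genuine gap: your main route cannot work, and you essentially concede this yourself. Theorem \ref{ThmC2} requires Lipschitz boundary of the \emph{whole} domain, and a bounded pseudoconvex complete Reinhardt domain in $\C^2$ can fail to be Lipschitz at axial boundary points while still carrying non-trivial analytic discs elsewhere in the boundary (your own example $\{|z_1|^{1/2}+|z_2|<1\}\cap\{|z_1z_2|<c\}$ does exactly this). Your case analysis at axis points does not repair this, because ruling out discs \emph{through} the non-Lipschitz points is irrelevant: the hypothesis of Theorem \ref{ThmC2} is global, so the theorem cannot be invoked at all for such domains, no matter where the disc sits. The contingency plan (classify boundary discs via flat pieces of $\partial L$, then redo the Fourier/monomial concentration argument of \cite{ClosSahutoglu18} for general pseudoconvex complete Reinhardt domains) is only a sketch of a substantially harder, different proof; none of its key steps (behavior of the normalized monomials, weak convergence, control of the projection term on a non-convex Reinhardt domain, and the treatment of discs that meet the coordinate axes) is carried out, so it cannot be counted as a proof.

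The paper's own argument avoids the Lipschitz issue entirely and is much shorter. Lemma \ref{LemLocalization} needs only that $\D$ be bounded pseudoconvex, so compactness of $H_\phi$ localizes near any boundary point. Away from the coordinate axes, $\D$ is locally convexifiable via $(z_1,z_2)\mapsto(\log z_1,\log z_2)$ (by \cite[Theorem 3.28]{RangeBook}), so Lemma \ref{LemHoloInvariance} transports the localized compactness to a convex model and the proof of Theorem \ref{ThmConvex} (exactly as in Corollary \ref{Cor1}) yields that $\phi\circ f$ is holomorphic whenever $f(\mathbb{D})$ avoids the axes. If the disc meets an axis, say $f_1$ vanishes somewhere, one restricts/reparametrizes so that $f_1(z)=0$ only at $z=0$ (zeros are isolated); the previous case gives holomorphy of $\phi\circ f$ on $\mathbb{D}\setminus\{0\}$, and since $\phi\circ f$ is continuous on $\mathbb{D}$, the origin is a removable singularity. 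If you want to keep a route through Theorem \ref{ThmC2}, the correct fix is not to verify global Lipschitz regularity but to combine Lemma \ref{LemLocalization} with the \emph{proof} of Theorem \ref{ThmC2} near the disc (where the boundary is locally Lipschitz after convexification), and you would still need the removable-singularity step at the axes, which your proposal omits.
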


\begin{remark}
Peter Matheos, in his thesis \cite{MatheosThesis} (see also 
\cite[Theorem 10]{FuStraube01} and \cite[Theorem 4.25]{StraubeBook}), 
constructed a smooth bounded pseudoconvex complete Hartogs 
domain in $\C^2$ that has no analytic disc in its boundary, yet the 
$\dbar$-Neumann operator on the domain is not compact. Furthermore, 
Zeytuncu and the third author \cite[Theorem 1]{SahutogluZeytuncu17} 
proved that on smooth bounded pseudoconvex Hartogs domains in $\C^2$, 
compactness of the $\dbar$-Neumann operator is equivalent to compactness 
of all Hankel operators with symbols smooth up to the boundary. 
Therefore, on Matheos' example the condition of Theorem \ref{ThmC2} 
is trivially satisfied, yet there exists a non-compact Hankel operator 
with a symbol smooth on the closure of the domain. Namely, 
the converse of Theorem \ref{ThmC2} is not true. On the other 
hand, the converse of Theorem \ref{ThmConvex} is open. 	    
\end{remark}

The plan of the paper is as follows: First we will prove a localization 
result for compactness of Hankel operators with bounded (not 
necessarily continuous) symbols. Then we concentrate on the proof of  
Theorem \ref{ThmC2}. Finally, we prove Theorem \ref{ThmConvex} 
and the corollaries. 

\section*{Localization of compactness}

We note that $H^U_{\phi}$ denotes  the Hankel operator on $A^2(U)$ 
with symbol $\phi$ which is an essentially bounded function on a domain $U$. 
Furthermore, we will use the following notation: $A\lesssim B$ means 
that there exists $c>0$ that does not depend on quantities of interest 
such that  $A\leq cB$. Also the constant $c$ might change at every 
appearance. In the following lemma and the rest of the paper, 
$B(p,r)$ denotes the open ball centered at $p$ with radius $r$.

\begin{lemma}\label{LemLocalization}
Let  $\D$ be a  bounded pseudoconvex domain in $\C^n$, 
$\phi \in L^{\infty}(\D)$, $p\in  b\D, 0<r_1<r_2$, and 
$R_{r_2,r_1}: A^2(B(p,r_2)\cap \D)\to A^2(B(p,r_1)\cap \D)$ be the 
restriction operator defined as $R_{r_2,r_1}f=f|_{B(p,r_1)\cap \D}$. 
Assume that  $H^{\D}_{\phi}$ is compact on $A^2(\D)$. Then 
$H^{B(p,r_1)\cap \D}_{\phi}R_{r_2,r_1}$ is compact on 
$A^2(B(p,r_2)\cap \D)$.
\end{lemma}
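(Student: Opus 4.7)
The plan is to reduce the local Hankel operator $H^U_\phi R_{r_2,r_1}$ (where $U:=B(p,r_1)\cap\D$ and $W:=B(p,r_2)\cap\D$) to the compact global Hankel operator $H^\D_\phi$ via a cutoff-plus-projection decomposition of the input. Fix $\chi\in C_c^\infty(B(p,r_2))$ with $\chi\equiv 1$ on $\overline{B(p,r_1)}$; for $g\in A^2(W)$, extend $\chi g$ by zero to $\D$ and split orthogonally in $L^2(\D)$ as
\[
\chi g = h_g + u_g,\qquad h_g:=P_\D(\chi g)\in A^2(\D),\qquad u_g:=(I-P_\D)(\chi g),
\]
with $P_\D$ the Bergman projection on $\D$. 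Both summands have $L^2(\D)$-norm $\lesssim\|g\|_{A^2(W)}$. Since $\dbar u_g=(\dbar\chi)g$ vanishes on $\overline U$, the residual $u_g$ is holomorphic on $U$ (equivalently, $u_g$ is the canonical $L^2$-minimal $\dbar$-solution on $\D$ with data $(\dbar\chi)g$), so $u_g|_U\in A^2(U)$ with comparable norm.

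Because $\chi\equiv 1$ on $U$ and $P_\D(\phi h_g)|_U\in A^2(U)$ is annihilated by $I-P_U$, a direct computation yields the identity
\[
H^U_\phi(g|_U)\;=\;(I-P_U)\bigl[H^\D_\phi(h_g)|_U\bigr]\;+\;H^U_\phi(u_g|_U).
\]
The first summand is a compact operator $A^2(W)\to L^2(U)$: it is the composition of the bounded map $g\mapsto h_g$ into $A^2(\D)$, the compact operator $H^\D_\phi$, the bounded restriction to $U$, and the bounded $I-P_U$.

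The main obstacle is to establish compactness of the second summand $g\mapsto H^U_\phi(u_g|_U)$. Testing on $g_k\rightharpoonup 0$ in $A^2(W)$, I would first check that $h_{g_k}\rightharpoonup 0$ in $A^2(\D)$ via the pairing $\langle h_{g_k},f\rangle_\D=\int_W\chi g_k\bar f\to 0$ for $f\in A^2(\D)$, using that weak convergence in $A^2(W)$ passes to weak convergence in $L^2(W)$ for sequences in $A^2(W)$. Hence $u_{g_k}\rightharpoonup 0$ in $L^2(\D)$, so $u_{g_k}|_U\rightharpoonup 0$ in $A^2(U)$ and converges locally uniformly on $U$ to zero. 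Upgrading this to $\|H^U_\phi(u_{g_k}|_U)\|_{L^2(U)}\to 0$ is the crux: the plan is to choose $\chi\equiv 1$ on a strictly larger ball $B(p,r_1')$ with $r_1<r_1'<r_2$ so that $u_{g_k}$ is actually holomorphic on $B(p,r_1')\cap\D$, then write $H^U_\phi(u_{g_k}|_U)=H^U_\phi R_{r_1',r_1}(u_{g_k}|_{B(p,r_1')\cap\D})$ and iterate the same decomposition on nested shrinking shells while exploiting the orthogonality $u_{g_k}\perp A^2(\D)$. This expresses $H^U_\phi R_{r_2,r_1}$ as an operator-norm limit of compact operators and completes the proof.
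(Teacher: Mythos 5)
Your decomposition $\chi g = P_{\D}(\chi g)+u_g$ and the compactness of the summand that factors through $H^{\D}_{\phi}$ are fine, but the treatment of the residual term $g\mapsto H^{U}_{\phi}(u_g|_U)$ is where the proof breaks down, and it is precisely the hard part. The residual $u_g=(I-P_{\D})(\chi g)$ is holomorphic on $B(p,r_1')\cap\D$ but it is \emph{not small}: its $A^2(B(p,r_1')\cap\D)$-norm is comparable to $\|g\|$, so showing that $g\mapsto H^{U}_{\phi}\bigl(u_g|_U\bigr)=H^{U}_{\phi}R_{r_1',r_1}\bigl(u_g|_{B(p,r_1')\cap\D}\bigr)$ is compact is exactly the statement of the lemma again, with $r_1'$ in place of $r_2$ — the argument is circular. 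Iterating the cutoff-and-project step on nested shells does not repair this: each iteration peels off a compact piece but leaves a residual operator of the same form whose norm bound is a fixed constant times $\|g\|$ (the orthogonality $u_g\perp A^2(\D)$ gives nothing on the local domain), so the partial sums of compact operators do not converge to $H^{U}_{\phi}R_{r_2,r_1}$ in operator norm. Likewise, knowing $u_{g_k}\rightharpoonup 0$ in $A^2(B(p,r_1')\cap\D)$ and locally uniformly does not upgrade to $\|H^{U}_{\phi}(u_{g_k}|_U)\|\to 0$, because on a general bounded pseudoconvex $\D$ (no regularity assumed here beyond boundedness) the local Hankel operator with a merely bounded symbol need not map weakly null sequences to norm-null ones — again, that is what the lemma is asserting.

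What is missing is a quantitative smallness mechanism linking the local and global Bergman spaces. The paper supplies it with the operator $E_{\ep}:A^2(B(p,r_2)\cap\D)\to A^2(\D)$ of \cite[Lemma 3]{Sahutoglu12}, which for every $\ep>0$ extends $f$ to a global square-integrable holomorphic function up to an error of size $\ep\|R_{U_1}f\|_{U_1}$ on the smaller set $U_1=B(p,r_1)\cap\D$. With this, $H^{U_1}_{\phi}R_{r_2,r_1}f$ splits into a term of size $O(\sqrt{\ep})\|f\|$ plus a term controlled by $\|H^{\D}_{\phi}E_{\ep}f\|_{\D}$, which factors through the compact global Hankel operator; the characterization of compactness in \cite[Lemma 4.3]{StraubeBook} (an $\ep\|f\|+\|K_{\ep}f\|$ bound for every $\ep$) then closes the argument. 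Your cutoff residual plays the role of $f-E_{\ep}f$, but without the $\ep$-smallness on $U_1$ that $E_{\ep}$ provides, the scheme cannot conclude. If you want to salvage your approach, you would need to prove an extension statement of the $E_{\ep}$ type (or otherwise produce a decay parameter), which is a genuinely nontrivial ingredient and not a consequence of the $\chi$, $P_{\D}$ splitting alone.
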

\begin{proof}
First we will simplify the notation and define the necessary operators. 
Let $U_j=B(p,r_j)\cap \D, Q^{U_j}=I-P^{U_j}:L^2(U_j)\to L^2(U_j)$ 
for $j=1,2$, and  $Q^{\D}=I-P^{\D}:L^2(\D)\to L^2(\D)$. 
Also, in the following calculations $\|.\|_{U_j}$ and $\|.\|_{\D}$ 
denote the $L^2$ norms on $U_j$ and $\D$, respectively. 

By \cite[Lemma 3]{Sahutoglu12} we have a bounded operator 
$E_{\ep}:A^2(U_2)\to A^2(\D)$ with the  following estimate 
\[\| R_{U_1}(f-E_{\ep}f)\|_{U_1}\leq \ep\|R_{U_1}f\|_{U_1}\]
for $f\in A^2(U_2)$ where $R_{U_1}$ denotes the restriction onto $U_1$. 
Then,  $H^{U_1}_{\phi}R_{U_1}g=Q^{U_1}R_{U_1}H^{\D}_{\phi}g$ for any 
$g\in A^2(\D)$. Then for $f\in A^2(U_2)$ we have  
\begin{align*}
\left\|H^{U_1}_{\phi}R_{r_2,r_1}f \right\|_{U_1}^2 
=& \langle R_{U_1}(\phi (f-E_{\ep} f)),Q^{U_1}R_{U_1}(\phi f) \rangle_{U_1}
	+\langle R_{U_1}(\phi E_{\ep} f),Q^{U_1}R_{U_1}(\phi f) \rangle_{U_1}  \\
\lesssim & \ep \|R_{U_1}f\|_{U_1}^2 + \left|\langle R_{U_1}(\phi E_{\ep} f), 
	Q^{U_1}R_{U_1}(\phi (f-E_{\ep} f)) \rangle_{U_1}\right| 
	+ \left\| H^{U_1}_{\phi}R_{U_1}(E_{\ep}f)\right\|_{U_1}^2\\
\lesssim & (\ep+\ep(1+\ep))\|R_{U_1}f\|_{U_1}^2  
	+ \left\|Q^{U_1}R_{U_1}H^{\D}_{\phi}E_{\ep}f \right\|_{U_1}^2\\
\lesssim & \ep(2+\ep)\|R_{U_1}f\|_{U_1}^2
	+\left\|R_{U_1}H^{\D}_{\phi}E_{\ep}f \right\|_{U_1}^2.
\end{align*}
Next we will use the  compactness characterization of operators in 
\cite[Lemma 4.3]{StraubeBook} (see also 
\cite[Proposition V.2.3]{D'AngeloBook}). Since $H^{\D}_{\phi}$ is 
compact, for every $\ep'>0$ there exists a compact operator 
$K_{\ep'}:A^2(\D)\to L^2(\D)$ such that 
\[\left\|R_{U_1}H^{\D}_{\phi}E_{\ep}f \right\|_{U_1}^2
	\leq \left\|H^{\D}_{\phi}E_{\ep}f \right\|^2_{\D}\leq
	\ep'\|E_{\ep}f\|^2_{\D}+\|K_{\ep'}E_{\ep}f\|^2_{\D}.\] 
Therefore, we have 
\[\left\|R_{U_1}H^{\D}_{\phi}E_{\ep}f \right\|_{U_1}^2
	\leq \ep'\|E_{\ep}\|^2\|f\|_{U_1}^2 
	+\|K_{\ep'}E_{\ep}f\|^2_{\D}.\]
We note that $K_{\ep'}E_{\ep}:A^2(U_2)\to L^2(\D)$ is compact 
for any $\ep$ and $\ep'.$  Now we choose $\ep'$ sufficiently 
small so that $\ep'\|E_{\ep}\|^2<\ep.$ Hence, there exists 
$C>0$ (independent of $\ep, \ep'$ and $f$) such that 
\[\left\|H^{U_1}_{\phi} R_{r_2,r_1}f\right\|_{U_1}^2
	\leq  C\ep(3+\ep)\|f\|_{U_2}^2
	+\|K_{\ep'}E_{\ep}f\|^2_{\D}.\]
Finally, \cite[Lemma 4.3]{StraubeBook} 
implies that $H^{U_1}_{\phi}R_{r_2,r_1}$ is compact 
on $A^2(U_2).$
\end{proof}

\begin{remark}
We note that the third author proved a localization result previously 
in \cite{Sahutoglu12}. In \cite[Theorem 1]{Sahutoglu12} the domain 
may be very irregular but the symbol was assumed to be $C^1$-smooth 
up to the boundary. In Lemma \ref{LemLocalization}, however,  
we assume that the symbol is only bounded on the domain.
\end{remark}

\section*{Proof of Theorem \ref{ThmC2}}

\begin{lemma}\label{LemConv}
Let $U$ be a domain in $\C$ and $\phi\in C(U)$ that is not 
holomorphic. Assume that  $\{\phi_k\}\subset C^1(U)$ such that 
$\langle\phi_k, h\rangle \to \langle\phi,h\rangle$ as $k\to\infty$ 
for all $h\in C^{\infty}_0(U)$. Then there exists a subsequence 
$\{\phi_{k_j}\},\delta>0,$ and $h\in C^{\infty}_0(U)$ such that 	
\[\left|\left\langle \frac{\partial \phi_{k_j}}{\partial \overline{z}}, 
	h \right\rangle\right|\geq \delta\]  
for all $j$.
\end{lemma}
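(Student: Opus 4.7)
The plan is to exploit the hypothesis that $\phi_k \to \phi$ tests weakly against $C^\infty_0(U)$, combined with integration by parts, so that the pairing $\langle \partial \phi_k/\partial \bar z, h\rangle$ converges to the action of the distributional $\dbar \phi$ on $h$. Once we know that $\dbar \phi \neq 0$ as a distribution, a suitable $h$ will realize a nonzero limit and any subsequence bounded below in absolute value will do.

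More concretely, the first step is to observe that since $\phi$ is continuous on $U$ but not holomorphic, the distribution $\dbar \phi$ is nonzero on $U$. This is the content of Weyl's lemma for the Cauchy--Riemann operator: a continuous function whose $\dbar$ vanishes in the distributional sense is classically holomorphic. Hence there exists $h\in C^\infty_0(U)$ with
\[
\langle \dbar \phi, h\rangle \;=\; -\left\langle \phi, \frac{\partial h}{\partial \overline{z}}\right\rangle \;=:\; -c, \qquad c\neq 0.
\]

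Next, for each $k$, since $\phi_k\in C^1(U)$ and $h$ has compact support in $U$, integration by parts gives
\[
\left\langle \frac{\partial \phi_k}{\partial \overline z}, h\right\rangle
\;=\; -\left\langle \phi_k, \frac{\partial h}{\partial \overline z}\right\rangle.
\]
Because $\partial h/\partial \overline z \in C^\infty_0(U)$, the hypothesis on $\{\phi_k\}$ implies
\[
\left\langle \phi_k, \frac{\partial h}{\partial \overline z}\right\rangle
\;\longrightarrow\; \left\langle \phi, \frac{\partial h}{\partial \overline z}\right\rangle \;=\; -c
\]
as $k\to\infty$. Combining the two displays, $\langle \partial \phi_k/\partial \overline z, h\rangle \to c$.

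Finally, set $\delta = |c|/2 > 0$. By the convergence just proved, there is $N$ such that for all $k\geq N$,
\[
\left|\left\langle \frac{\partial \phi_k}{\partial \overline z}, h\right\rangle\right| \;\geq\; \delta,
\]
and any subsequence $\{\phi_{k_j}\}$ with $k_j\geq N$ satisfies the conclusion. The only substantive point to verify carefully is the application of Weyl's lemma to pass from ``$\phi$ not holomorphic as a continuous function'' to ``$\dbar\phi\neq 0$ as a distribution''; everything else is integration by parts plus the definition of weak convergence against test functions.
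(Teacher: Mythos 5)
Your proof is correct and is essentially the paper's argument in contrapositive form: both rest on integration by parts to move $\dbar$ onto the test function, the weak-convergence hypothesis applied to that (still compactly supported) derivative, and elliptic regularity (Weyl's lemma) to convert ``$\phi$ continuous and not holomorphic'' into ``$\dbar\phi\neq 0$ as a distribution''; your direct version even gives the whole tail of the sequence rather than a subsequence. One notational quibble: with the sesquilinear pairing $\langle f,g\rangle=\int f\overline{g}\,dV$ used in the paper, integration by parts gives $\langle \partial\phi_k/\partial\overline{z},h\rangle=-\langle\phi_k,h_z\rangle$ (the $z$-derivative, not the $\overline{z}$-derivative, of $h$), but this does not affect your argument since $h_z\in C^{\infty}_0(U)$ as well.
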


\begin{proof}
We want to show that there exists $h\in C^{\infty}_0(U)$ such that 
$\langle (\phi_k)_{\zb}, h\rangle$ does not converge to $0$ 
as $k\rightarrow \infty$. Suppose that 
$\langle (\phi_k)_{\zb}, h\rangle\rightarrow 0$ as 
$k\rightarrow \infty$  for all $h\in C^{\infty}_0(U) $.  Then, 
\[\langle \phi_k,h_z \rangle\rightarrow \langle \phi, h_z\rangle 
\text{ as } k\rightarrow\infty.\] 
Hence, in the limit we have $\langle \phi,h_z\rangle=0$ for all 
$h\in C^{\infty}_0(U)$. This implies that $\phi$ is in the kernel 
of the $\overline{\partial}$ operator (in the distribution sense) 
on $U$. In particular, $\phi$ is harmonic. Then $\phi$ is 
$C^{\infty}$-smooth (see, for instance, \cite[Corollary 2.20]{FollandBook}) 
and, in turn, it is holomorphic. This contradicts with the assumption 
that  $\phi$ is not holomorphic. 

Therefore, there exists 
$\delta>0, h\in C^{\infty}_0(U)$, and a subsequence 
$\phi_{k_j}$ such that 
\[|\langle (\phi_{k_j})_{\zb},h\rangle|\geq \delta\] 
for all $j$.
\end{proof}

\begin{lemma}\label{LemHoloInvariance}
Let $\D_1$ and $\D_2$ be two bounded domains in $\C^n$, $F:\D_1\to \D_2$ 
be a biholomorphism, and $\phi\in L^{\infty}(\D_2)$. Furthermore, let $U_1$ 
is an open set in $\D_1, F(U_1)=U_2$, and $R_j:A^2(\D_j)\to A^2(U_j)$ be the 
restriction operators for $j=1,2.$ Assume that $H_{\phi}^{U_2}R_2$ is compact. 
Then  $H_{\phi\circ F}^{U_1}R_1$ is compact. 
\end{lemma}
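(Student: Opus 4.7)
The plan is to exploit the classical transformation law for the Bergman projection under biholomorphism: for a biholomorphism $G:V_1\to V_2$ between bounded domains, the weighted pull-back $T_G:L^2(V_2)\to L^2(V_1)$ defined by $(T_G g)(z) = \det G'(z)\cdot g(G(z))$ is a unitary operator that restricts to a unitary between the Bergman spaces and intertwines the Bergman projections, $T_G P^{V_2}=P^{V_1}T_G$. Applying this to both $F:\D_1\to\D_2$ and to its restriction $F|_{U_1}:U_1\to U_2$ (which is again biholomorphic), I obtain two unitaries $T_F$ and $T_{F|_{U_1}}$ with the corresponding intertwining identities.

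Combining the intertwining identity with the trivial identity $T_G M_{\phi}=M_{\phi\circ G} T_G$ yields, on either pair of domains,
\[
T_G H^{V_2}_{\phi}=T_G(I-P^{V_2})M_{\phi}=(I-P^{V_1})M_{\phi\circ G}T_G=H^{V_1}_{\phi\circ G}T_G.
\]
In particular $T_{F|_{U_1}}H^{U_2}_{\phi}=H^{U_1}_{\phi\circ F}T_{F|_{U_1}}$.

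Next I would check that the restriction operators $R_1,R_2$ fit into a commutative square with the unitaries: for $g\in A^2(\D_2)$ and $z\in U_1$,
\[
(T_{F|_{U_1}}R_2 g)(z)=\det F'(z)\cdot g(F(z))=(T_F g)(z)=(R_1 T_F g)(z),
\]
so $T_{F|_{U_1}}R_2=R_1 T_F$, and therefore $R_1=T_{F|_{U_1}}R_2 T_F^{-1}$ since $T_F$ is unitary.

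Putting these pieces together I obtain the factorization
\[
H^{U_1}_{\phi\circ F}R_1 \;=\; H^{U_1}_{\phi\circ F}T_{F|_{U_1}}R_2 T_F^{-1}\;=\;T_{F|_{U_1}}\bigl(H^{U_2}_{\phi}R_2\bigr)T_F^{-1}.
\]
Since $T_{F|_{U_1}}$ and $T_F^{-1}$ are bounded, the compactness of $H^{U_2}_{\phi}R_2$ transfers directly to $H^{U_1}_{\phi\circ F}R_1$. There is no real obstacle here; the only point requiring mild care is to verify that $T_G$ genuinely intertwines the Bergman projection on the possibly irregular open set $U_j$ (this is a standard consequence of the change-of-variables formula and the characterization of $P^{V_j}$ as the orthogonal projection onto the closed subspace $A^2(V_j)$, which $T_G$ sends unitarily onto $A^2(V_1)$).
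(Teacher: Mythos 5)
Your proposal is correct and rests on the same ingredient as the paper's proof, namely the transformation law $P^{V_1}\bigl(\det G'\cdot(g\circ G)\bigr)=\det G'\cdot\bigl(P^{V_2}g\bigr)\circ G$ for the weighted pullback under a biholomorphism (Bell's formula), which the paper uses to derive the identity \eqref{EqnBell} and then runs a bounded-sequence argument. You merely package the same computation as the operator factorization $H^{U_1}_{\phi\circ F}R_1=T_{F|_{U_1}}\bigl(H^{U_2}_{\phi}R_2\bigr)T_F^{-1}$ with $T_F,T_{F|_{U_1}}$ unitary, which gives compactness immediately; this is a clean, equivalent rendering of the paper's argument, and your remark that the intertwining needs no boundary regularity (a unitary carrying $A^2(U_2)$ onto $A^2(U_1)$ intertwines the orthogonal projections) correctly handles the only delicate point.
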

\begin{proof}
First,  we mention the following formula about Bergman projections.  
Let $J_F$ denote the determinant of  (complex) Jacobian of $F$ and 
$g\in L^2(U_2)$. Then by \cite[Theorem 1]{Bell81} we have 
\begin{align*}
P_{U_1}(J_F\cdot (g\circ F)) =J_F\cdot P_{U_2}(g)\circ F
\end{align*} 
(see also \cite[Proof of Theorem 12.1.11]{JarnickiPflugBook}). 

Next, using the equality above, we will get an equality between 
Hankel operators on $U_1$ and $U_2$. To that end let $h$ be a 
square integrable holomorphic function on $U_2$. Then 
\begin{align*}
H_{\phi\circ F}^{U_1}(h\circ F)
=& \phi\circ F\cdot  h\circ F- P_{U_1}(\phi\circ F\cdot  h\circ F)\\
=&J_F\cdot \phi \circ F\cdot \frac{h\circ F}{J_F}-J_F\cdot P_{U_2}
	\left( \frac{\phi\circ F\cdot  h\circ F}{J_F}\circ F^{-1} \right)\circ F\\
=& J_F \cdot \left(\phi \cdot \frac{h}{J_F\circ F^{-1}} -P_{U_2} 
	\left(\phi \cdot \frac{h}{J_F\circ F^{-1}}\right) \right)\circ F \\
=& J_F \cdot  H_{\phi}^{U_2}\left(\frac{h}{J_F\circ F^{-1}}\right)\circ F.
\end{align*}

We need to make sure that  
$\frac{f \circ F^{-1}}{J_F\circ F^{-1}}\in A^2(\D_2)$ 
for any $f\in A^2(\D_1)$. In fact,  
\begin{align*} 
\left\|\frac{f\circ F^{-1}}{J_F\circ F^{-1}}\right\|_{\D_2}^2
=& \int_{\D_2} \left|\frac{f \circ F^{-1}(w)}{J_F\circ F^{-1}(w)}
	\right|^2dV(w)\\
 =& \int_{\D_1} \left|\frac{f \circ F^{-1}(F(z))}{J_F\circ F^{-1}(F(z))}
	\right|^2|J_F(z)|^2dV(z)\\
=& \int_{\D_1} \left|\frac{f(z)}{J_F(z)}\right|^2|J_F(z)|^2dV(z)\\
=& \int_{\D_1} |f(z)|^2dV(z)\\
 =&\left\|f\right\|_{\D_1}^2.
\end{align*}

So far we have shown that if $\{f_j\}$ is a bounded sequence in $A^2(\D_1)$ 
then $\left\{f_j\circ F^{-1}/(J_F\circ F^{-1})\right\}$ is a bounded 
sequence in $A^2(\D_2)$ and 
\begin{align}\label{EqnBell}
H_{\phi\circ F}^{U_1}R_1(f_j)
	=J_F \cdot H_{\phi}^{U_2}R_2 
	\left(\frac{f_j \circ F^{-1}}{J_F\circ F^{-1}}\right)\circ F.
\end{align} 
Then compactness of $H_{\phi}^{U_2}R_2$ implies that  
$\left\{H_{\phi}^{U_2}R_2
\left(\frac{f_j \circ F^{-1}}{J_F\circ F^{-1}}\right)\right\}$ has a 
convergent subsequence in $L^2(U_2)$. Using the fact that 
$\|h\|_{U_2}=\|J_F\cdot h\circ F\|_{U_1}$ for any $h\in L^2(U_2)$ 
together with \eqref{EqnBell}, we conclude that 
$\{H_{\phi\circ F}^{U_1}R_1(f_j)\}$ has a convergent 
subsequence in $L^2(U_1)$. 
Therefore, $H_{\phi\circ F}^{U_1}R_1$ is compact. 
\end{proof}

Let $\chi \in C^{\infty}_0(B(0,1))$ such that $\int_{B(0,1)}\chi(z)dV(z)=1$. 
We  define  
\[\chi_k(z)=k^{2n}\chi(kz)\]
for $k=1,2,3,\ldots$

\begin{proof}[Proof of Theorem \ref{ThmC2}]
We assume that $H_{\phi}$ is compact and there is a holomorphic 
map $f:\mathbb{D}\to b\D$ such that $\phi\circ f$ is not holomorphic. 
Then $f$ is a non-constant mapping. We can use Lemma \ref{LemLocalization} 
to localize the compactness of $H_{\phi}$ near a boundary point 
$f(\xi_0)=p\in b\D$ such that $\phi\circ f$ is not holomorphic near 
$\xi_0$. That is, we choose $0<r_1<r_2$ such that 
$H^{\D\cap B(p,r_1)}_{\phi}R_{r_2,r_1}$ is compact on 
$A^2(\D\cap B(p,r_2))$ and $\phi\circ f$ is not holomorphic 
on $f^{-1}(B(p,r_1))$. To simplify the geometry 
we want to straighten the disc near $p$ yet keep compactness of the 
Hankel operator locally. So, shrinking $r_1,r_2$ if necessary, we 
use a local holomorphic change of coordinates 
\[F:\D\cap B(p,r_2)\to \C^2\] 
so that $F\circ f$ maps $f^{-1}(B(p,r_2))$ onto 
an open set on $z_1$-axis and $F\circ f(\xi_0)=0$. 

To simplify the notation, let us denote $\D_1=F(\D\cap B(p,r_1))$ and 
$\D_2=F(\D\cap B(p,r_2))$. Lemma \ref{LemHoloInvariance} 
implies that $H_{\phi \circ F^{-1}}^{\D_1}R$ is compact on 
 $A^2(\D_2)$ where $R:A^2(\D_2)\to A^2(\D_1)$ is the 
 restriction operator. Therefore, without loss of  generality,  
 we may assume that 
 \begin{itemize}
\item[i.]  $\phi\in C(\C^2)$, using Tietze extension theorem, 
\item[ii.] $(0,0)\in \Gamma_1\times \{0\}\subset b\D_2$ 
	is a  non-trivial affine disc where $\Gamma_1=\{z\in \C:|z|<s_1\}$,
\item[iii.] $\D_2\subset \left\{(z_1,z_2)\in \C^2:|\text{arg}(z_2)|
	<\theta_1\right\}$ for some $0<\theta_1<\pi$,
\item[iv.] $H^{\D_1}_{\phi}R$ is compact on $A^2(\D_2)$.
\end{itemize}

Next we will use mollifiers (approximations to the identity) and 
trivial extensions to approximate $\phi$ on $z_1$-axis by suitable 
smooth functions $\phi_k$. We define 
$\widetilde{\phi}=\phi|_{\{(z_1,z_2)\in \C^2:z_2=0\}}$ and 
\[\phi_k=E(\widetilde{\phi}*\chi_k)\] 
where $*$ and $E$ denote the convolution and the 
trivial extension from $\{(z_1,z_2)\in \C^2:z_2=0\}$ to $\C^2$, 
respectively.  Then $\phi_k\to \phi$  uniformly on compact 
subsets in $\{(z_1,z_2)\in \C^2:z_2=0\}$ as $k\to \infty$ 
(see, for instance, \cite[2.29 Theorem]{AdamsFournierBook}).  
We note that, since $\phi_k$s are extended trivially in 
$z_2$-variable, the sequence $\{\phi_k\}$ is uniformly 
convergent on compact sets in $\C^2$. Hence, $\{\phi_k\}$ 
is uniformly bounded on $\Dc_1$. 
	
Lemma \ref{LemConv} implies that there exist 
$\delta>0, h\in C^{\infty}_0(\Gamma_1),$ and a 
subsequence $\{\phi_{k_j}\}$ such that  
\[\left|\langle (\phi_{k_j})_{\zb_1}, 
	h\rangle_{\Gamma_1}\right|\geq \delta>0\]
for all $j=1,2,3,\ldots$.  By passing to a subsequence, 
if necessary, we can assume that 
\[\left|\langle (\phi_k)_{\zb_1}, 
	h\rangle_{\Gamma_1}\right|\geq \delta>0\]
for all $k=1,2,3,\ldots$.

Since $\D_2$ has Lipschitz boundary  there exist $s_2>s_1$, 
$0<t_1<t_2$,  and $0<\theta_1<\pi/2<\theta_2<\pi$ such that  
\[\Gamma_1\times W_{t_1,\theta_1}
	\subset \D_1\subset  \D_2 
	\subset \Gamma_2 \times  W_{t_2,\theta_2} \]
 where $\Gamma_2 =\{z\in \C:|z|<s_2\}$ and  
\[W_{t_j,\theta_j} 
	=\left\{\rho e^{i\theta}\in \C:0<\rho<t_j, |\theta|
	<\theta_j\right\}\]
for $j=1,2$. 

We define a sequence of functions on $\D_2$ as 
\[f_j(z_1,z_2)= \frac{\alpha_j}{z_2^{\beta_j}}\]
where $\beta_j=1-1/j$ and $\alpha_j\to 0$ such that 
$\|f_j\|_{L^2(W_{t_1,\theta_1})}=1$ for all $j$. One can 
show that $\{f_j\}$ is a bounded sequence in $A^2(\D_2)$ as 
 $\|f_j\|_{L^2(W_{t_2,\theta_2})}$ are uniformly bounded. 
 Furthermore, the sequence  $\{f_j\}$ converges to zero 
 uniformly on compact subsets that are away from 
 $\{(z_1,z_2)\in \C^2:z_2=0\}$. Then $f_j\to 0$ weakly 
in $A^2(\D_2)$ as $j\to \infty$. Later on we will reach a 
contradiction by showing  that  $\|H^{\D_1}_{\phi}Rf_j\|_{\D_1}$ 
stays away from zero.

We remind the reader that  for any $f\in A^2(\D_2)$ and $k$ we have  
\[(H^{\D_1}_{\phi_k}Rf)_{\zb_1}
	=(Rf\phi_k)_{\zb_1}-(P_{\D_1}R(f\phi_k))_{\zb_1}
	 =R\left(f(\phi_k)_{\zb_1}\right).\]
Using the identity above (when we pass from second to third 
line below) and the Cauchy-Schwarz inequality (in $z_1$ on $\Gamma_1$ 
on the second inequality below) we get 
\begin{align*}
\delta^2= \delta^2 \|f_j\|^2_{L^2(W_{t_1,\theta_1})} 
& \leq  \int_{W_{t_1,\theta_1}} 
	\left|\langle (\phi_k)_{\zb_1}, h\rangle_{\Gamma_1}\right|^2
	f_j(.,z_2)\overline{f_j(.,z_2)} dV(z_2) \\
&=\int_{W_{t_1,\theta_1}}\langle (\phi_kf_j)_{\zb_1}, 
	h \rangle_{\Gamma_1} \overline{\langle (\phi_kf_j)_{\zb_1}, 
	h \rangle_{\Gamma_1}}dV(z_2)\\
&=\int_{W_{t_1,\theta_1}}
	\left|\langle (H^{\D_1}_{\phi_k}Rf_j)_{\zb_1},
	h \rangle_{\Gamma_1}\right|^2dV(z_2)\\
&=\int_{W_{t_1,\theta_1}}
	\left|\langle H^{\D_1}_{\phi_k}Rf_j,
	h_{z_1}\rangle_{\Gamma_1}\right|^2dV(z_2)\\
&\leq \int_{W_{t_1,\theta_1}}
	\|H^{\D_1}_{\phi_k}Rf_j(.,z_2)\|^2_{\Gamma_1} 
	\|h_{z_1}\|^2_{\Gamma_1}dV(z_2)\\
&= \|H^{\D_1}_{\phi_k}Rf_j\|^2_{\Gamma_1 \times W_{t_1,\theta_1}} 
	\|h_{z_1}\|^2_{\Gamma_1}.
\end{align*} 
Then,  for all $j$ and $k$, we have  
\begin{align*}
\frac{\delta}{\|h_{z_1}\|_{\Gamma_1}} 
	\leq \|H^{\D_1}_{\phi_k}Rf_j\|_{\D_1}.
\end{align*}
Using the facts that $\phi_k\to \phi$ uniformly on $\Gamma_1$, 
	the sequence $\{\phi_k\}$ is uniformly bounded on $\Dc_1$,  
and $f_j\to 0$ uniformly on compact subsets away from $z_1$-axis,  
one can show that   
\begin{align*}
\|H^{\D_1}_{\phi_k-\phi}Rf_j\|_{\D_1} 
	\leq \|(\phi_k-\phi)Rf_j\|_{\D_1} 
	\to 0 \text{ as } j,k\to \infty.
\end{align*}
Then we have 
\[\frac{\delta}{\|h_{z_1}\|_{\Gamma_1}} 
	\leq \|H^{\D_1}_{\phi_k}Rf_j\|_{\D_1} 
	\leq \|H^{\D_1}_{\phi_k-\phi}Rf_j\|_{\D_1} 
	+ \|H^{\D_1}_{\phi}Rf_j\|_{\D_1}.\]
Then if we let $j,k\to \infty$ we get 
\[0<\frac{\delta}{\|h_{z_1}\|_{\Gamma_1}} 
	\leq\liminf_{j\to \infty} \|H^{\D_1}_{\phi}Rf_j\|_{\D_1}.\] 
Finally, we conclude that $H^{\D_1}_{\phi}R$ is not compact on $A^2(\D_2)$ 
because if it were, the sequence $\{H^{\D_1}_{\phi}Rf_j\}$ would 
converge to zero in norm. Therefore, using Lemma \ref{LemLocalization}, 
we reach a contradiction with the assumption that $H_{\phi}$ is compact.
\end{proof}

\section*{Proof of Theorem \ref{ThmConvex} and Corollaries}

In Lemma \ref{LemNotHolo} below we will use the following 
notation: $L_{z_0,z_1}:\mathbb{D}\to b\D$ is defined as 
$L_{z_0,z_1}(\xi)=z_0+\xi z_1$ where $z_0,z_1\in \C^n$.
\begin{lemma}\label{LemNotHolo}
Let $\D$ be a bounded convex domain in $\C^n$ and 
$\phi\in C(\Dc)$. Assume that there exists a holomorphic 
function $f:\mathbb{D}\rightarrow b\D$ so that $\phi\circ f$ 
is not holomorphic.  Then there exist $z_0\in b\D, z_1\in \C^n$ 
such that $L_{z_0,z_1}(\mathbb{D})\subset b\D$  and 
$\phi\circ L_{z_0,z_1}$ is not holomorphic.  
\end{lemma}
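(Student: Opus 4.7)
The plan is to argue by contrapositive: assume $\phi\circ L_{z_0,z_1}$ is holomorphic for every affine disc $L_{z_0,z_1}:\mathbb{D}\to b\D$, and derive that $\phi\circ f$ is holomorphic for every holomorphic disc $f:\mathbb{D}\to b\D$, contradicting the hypothesis. Fix such an $f$ and an arbitrary $\xi_0\in\mathbb{D}$. The first and main step is to produce a positive-dimensional complex affine subspace $E\subset\C^n$ and a relatively open subset $K^0\subset E$ with $K^0\subset b\D$ and $f(\xi_0)\in K^0$, that is, a flat piece of $b\D$ passing through $f(\xi_0)$.

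I build $E$ by iteratively peeling off complex hyperplanes using convexity and the maximum principle. Take a real supporting hyperplane of $\D$ at $f(\xi_0)$, of the form $\{\mathrm{Re}\,L\le c\}$ for some $\C$-linear $L$. Since $\mathrm{Re}(L\circ f)$ is harmonic on $\mathbb{D}$ and attains its maximum $c$ at $\xi_0$, it is identically $c$, which together with the holomorphicity of $L\circ f$ gives $L\circ f\equiv L(f(\xi_0))$. Thus $f(\mathbb{D})\subset E_1:=\{L=L(f(\xi_0))\}$ and $K_1:=E_1\cap\Dc\subset b\D$. If $f(\xi_0)$ lies on the relative boundary of $K_1$ in $E_1$, repeat the argument inside $E_1$ using a real supporting hyperplane of $K_1$ at $f(\xi_0)$; each such step strictly lowers $\dim_{\C}$ by one, so the process terminates. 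Reaching $\dim_{\C}E=0$ would force $f$ to be constant (and hence $\phi\circ f$ to be holomorphic), so termination occurs at some $E$ with $\dim_{\C}E\ge 1$, and the relative interior $K^0:=\mathrm{int}_E(E\cap\Dc)$ is a non-empty open subset of $E$ contained in $b\D$ and containing $f(\xi_0)$.

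Every affine disc in the open set $K^0$ is in particular an affine disc into $b\D$, so the standing hypothesis makes $\phi|_{K^0}$ holomorphic along every complex line in $K^0$; choosing such lines parallel to the coordinate axes of $E\cong \C^d$ exhibits $\phi|_{K^0}$ as separately holomorphic in each variable, and Hartogs' theorem on separate analyticity upgrades this to joint holomorphicity on $K^0$. By the continuity of $f$ and the openness of $K^0$ in $E$ there is a neighborhood $V\subset\mathbb{D}$ of $\xi_0$ with $f(V)\subset K^0$, on which $\phi\circ f=(\phi|_{K^0})\circ f$ is a composition of holomorphic maps and hence holomorphic. Since $\xi_0\in\mathbb{D}$ was arbitrary, $\phi\circ f$ is holomorphic on $\mathbb{D}$, the desired contradiction. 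I expect the supporting-hyperplane induction to be the main technical obstacle: one must carefully maintain both $f(\mathbb{D})\subset E_k$ and $K_k\subset b\D$ at every stage, and rule out the degenerate termination in which $E$ collapses to a single point.
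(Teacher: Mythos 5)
Your argument is correct, but it is organized differently from the paper's. The paper argues directly: it cites \cite[Lemma 2]{CuckovicSahutoglu09} (see also \cite[Section 2]{FuStraube98}) to place the convex hull of $f(\mathbb{D})$ inside an affine variety $V\subset b\D$, notes that $\phi|_V$ cannot be holomorphic (otherwise $\phi\circ f$ would be), and then invokes the fact that a continuous function on an open piece of an affine subspace is holomorphic if and only if it is holomorphic along every complex line, which hands over the desired affine disc on which $\phi$ fails to be holomorphic. You instead argue by contraposition and, rather than citing the flattening result, re-derive a local version of it from scratch: your supporting-hyperplane plus maximum-principle induction is precisely the mechanism behind the cited lemma, and it is carried out soundly --- $\mathrm{Re}(L\circ f)$ attains an interior maximum, so $L\circ f$ is constant; every real supporting hyperplane can be written as $\{\mathrm{Re}\,L=c\}$ with $L$ complex linear, so each cut lowers the complex dimension by one; and the zero-dimensional termination only happens when $f$ is constant, where there is nothing to prove. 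Your Hartogs/Osgood step (Osgood suffices, since $\phi$ is continuous) is the same line-versus-open-set equivalence the paper uses, just run in the opposite logical direction. What your route buys is self-containedness and the observation that only a local flat piece of $b\D$ through $f(\xi_0)$ is needed; what the citation buys the paper is brevity and the stronger global fact that the whole convex hull of $f(\mathbb{D})$ lies in a single affine variety in $b\D$, from which the non-holomorphic affine disc is extracted immediately.
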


\begin{proof}
We first use \cite[Lemma 2]{CuckovicSahutoglu09} 
(see also \cite[Section 2]{FuStraube98}) to conclude that  
the convex hull of $f(\mathbb{D})$ is contained in an 
affine variety  $V\subset b\D$.  So $\phi|_V$ is not 
holomorphic. Next we use the following fact: 
a continuous function is holomorphic on an open set 
$U$ if and only if it is holomorphic on every complex 
line in $U$.  Therefore, we conclude that there is a 
complex line $L_{z_0,z_1}(\mathbb{D})\subset V$ such 
that $\phi\circ L_{z_0,z_1}$ is not holomorphic on $\mathbb{D}$.   
\end{proof}

We will need the following lemma in the proof of 
Theorem \ref{ThmConvex}. 
\begin{lemma}\label{LemFunc}
Let $\D$ be a domain in $\C^n$ with Lipschitz boundary 
such that $0\in b\D$. Then the function $f(z)=|z_{n}|^{-p}$ 
is not square integrable on $\D$ for $p\geq n$.    
\end{lemma}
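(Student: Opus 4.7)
The plan is to exhibit a truncated solid cone inside $\D$ with vertex at the origin and then compute the integral in spherical coordinates. First, I would use the Lipschitz boundary assumption: after a real orthogonal change of coordinates near $0$, the boundary is locally the graph of a Lipschitz function $\rho$ vanishing at $0$, so $\D$ contains an open set of the form $\{x \in \mathbb{R}^{2n} : x_{2n} > L|x'|,\ |x| < r_0\}$ for some $r_0>0$ and the Lipschitz constant $L$. Rotating back to the original complex coordinates, this means $\D$ contains a truncated open cone
\[
C = \{tv : 0 < t < r_0,\ v \in V\},
\]
where $V \subset S^{2n-1}$ is a nonempty open subset.

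Next, I would write the integral of $|z_n|^{-2p}$ over $C$ in spherical coordinates $z = tv$. Since $|z_n| = t|v_n|$ and $dV(z) = t^{2n-1}\, dt\, d\sigma(v)$, Tonelli's theorem gives
\[
\int_C |z_n|^{-2p}\, dV(z) = \left(\int_0^{r_0} t^{2n-1-2p}\, dt\right) \left(\int_V |v_n|^{-2p}\, d\sigma(v)\right).
\]
For $p \geq n$ the radial exponent satisfies $2n - 1 - 2p \leq -1$, so the $t$-integral is $+\infty$. The spherical factor is strictly positive, since $V$ is open and the great subsphere $\{v \in S^{2n-1} : v_n = 0\}$ has $\sigma$-measure zero, so $|v_n|^{-2p} > 0$ on a subset of $V$ of positive measure. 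The product is therefore $+\infty$, and since $C \subset \D$, the function $f(z) = |z_n|^{-p}$ is not square integrable on $\D$.

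The main thing to verify carefully is the first step — that the Lipschitz boundary really produces an open cone in $\D$ with vertex at $0$. This is a standard consequence of representing $b\D$ locally as a Lipschitz graph, but it is the only place the real geometry enters the argument. The real rotation used to straighten $b\D$ does not preserve the Hermitian quantity $|z_n|$, yet this is harmless: we only need the existence of some truncated open cone in $\D$, and the spherical-coordinate computation above applies regardless of where $V$ sits on the sphere, since the angular integral is strictly positive whenever $V$ is a nonempty open subset of $S^{2n-1}$.
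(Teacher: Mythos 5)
Your proof is correct, and it reaches the conclusion by a somewhat different route than the paper. Both arguments rest on the interior cone property of Lipschitz domains at $0$ together with the divergence of $\int_0^{r_0}t^{2n-1-2p}\,dt$ for $p\geq n$; the difference lies in how the cone is positioned and how the integral is estimated. The paper first arranges, by a rotation, that the positive $y_n$-axis is transversal to $b\D$, so that $\D$ contains the circular wedge $W_{\ep,\alpha}$ about the negative $y_n$-axis; it then integrates out $z'$ over the balls $|z'|^2<\alpha^2y_n^2-x_n^2$ and reduces to a one-variable integral over a planar wedge in the $z_n$-plane. You instead keep the interior cone in general position and factor the integral in polar coordinates of $\mathbb{R}^{2n}$, using only that the angular factor $\int_V|v_n|^{-2p}\,d\sigma(v)$ is strictly positive; since this factor may equal $+\infty$ when $V$ meets $\{v_n=0\}$, it is worth stating explicitly that this is harmless because both factors are nonnegative (Tonelli) and the radial factor already diverges --- which your write-up essentially does. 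Your route buys robustness: it requires no transversality of the $y_n$-axis, and it confronts directly the point that a general real rotation does not preserve $|z_n|$, which the paper's phrase ``we can use rotation to assume'' passes over (harmless in the paper's application, but your argument makes the lemma's stated generality transparent). The paper's route, in exchange, yields an explicit lower bound with concrete constants tied to the Lipschitz geometry.
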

\begin{proof}
We can use rotation to assume that positive $y_n$-axis 
is transversal to $b\D$ and  there exists $\alpha,\ep>0$ 
such that  
\[W_{\ep,\alpha}
	=\{(z',z_n)\in \C^{n-1}\times \C :|z'|^2+x_n^2<\alpha^2y_n^2, -\ep<y_n<0\} 
	\subset \D\] 
where $z_n=x_n+iy_n$. In the following calculation 
$w_{\ep,\alpha}=\{x_n+iy_n\in \C:|x_n|+\alpha y_n<0,-\ep<y_n<0\}$ 
is a wedge in $z_n$-axis. 
\begin{align*}
\int_{\D}|z_n|^{-2p}dV(z)
\geq & \int_{W_{\ep,\alpha}}|z_n|^{-2p}dV(z',z_n)\\
=&\int_{z_n\in w_{\ep,\alpha}}
	\int_{|z'|^2<\alpha^2y_n^2-x_n^2}|z_n|^{-2p} dV(z')dV(z_n) \\ 
\gtrsim & \int_{z_n\in w_{\ep,\alpha}}
	(\alpha^2y_n^2-x_n^2)^{n-1}|z_n|^{-2p} dV(z_n)\\
\gtrsim &  \int_0^{\ep} \frac{1}{r^{1+2(p-n)}}dr.
\end{align*}
Therefore, if $p\geq n$ the function $f(z)=|z_{n}|^{-p}$ is not 
square integrable on $\D$ as the last integral above is infinite.  
\end{proof}

\begin{proof}[Proof of Theorem \ref{ThmConvex}]
Using holomorphic linear translation, if necessary, we may assume 
that $\D\subset \{y_n<0\}$ and the origin is in the boundary of $\D$. 
Furthermore, by Lemma \ref{LemNotHolo} we may assume 
that $0\in \Gamma=\{z\in \C:(z,0,\ldots,0)\in b\D\}$ is a non-trivial 
affine analytic disc such that $\phi(.,0,\ldots, 0)$ is not holomorphic. 
Finally, since convex domains have Lipschitz boundary (see, 
for instance, \cite{WayneState}), we may also assume that 
positive $y_n$-axis is transversal to $b\D$ on $\Gamma$. 

Let $\D^{z_1}=\{z''\in \C^{n-1}:(z_1,z'')\in \D\}$ be the slice of $\D$ 
perpendicular to $\Gamma$ at $z_1\in \Gamma$.  Convexity of $\D$ 
and the fact that $0\in \Gamma \times \{0\}\subset b\D$ imply that 
\[\left(\frac{z_1}{2},\frac{z''}{2}\right) 
=\frac{1}{2}(z_1,0)+\frac{1}{2}(0,z'')\in \D\] 
for $z_1\in \Gamma$ and $z''\in \D^0$. That is, $\D^0\subset 2\D^{z_1/2}$ 
for $z_1\in \Gamma$. Equivalently, $\D^0\subset 2\D^{z_1}$ 
for $z_1\in \frac{1}{2}\Gamma$. Hence, 
\begin{align}\label{Eqn1}
\frac{1}{2}(\Gamma \times \D^0) \subset \D.
\end{align}

To get another inclusion, let $0<r_1$ such that 
$\{z_1\in \C:|z_1|<r_1\}\subset \Gamma$ and $z''\in \D^{z_1}$. 
Then, we have $(z_1,z'')\in \D$ and  $(-z_1,0)\in \Gamma$. Hence  
\[\left(0,\frac{z''}{2}\right) 
=\frac{1}{2}(-z_1,0) +\frac{1}{2}(z_1,z'')\in \D.\] 
That is, $\frac{1}{2}\D^{z_1}\subset \D^0$ for $|z_1|<r_1$. Namely,  
$\D^{z_1}\subset 2\D^0$ for $|z_1|<r_1$. Hence, 
\[\D\cap B(0,r_1)\subset 2 (\Gamma \times \D^0).\]
Therefore, combining the previous inclusion with \eqref{Eqn1} we get  
\[\frac{1}{2}(\Gamma \times \D^0)\cap B(0,r_1)
	\subset \D\cap B(0,r_1) 
	\subset 2 (\Gamma \times \D^0).\]
 
Next we will use Lemma \ref{LemFunc} to produce a bounded sequence 
$\{f_j\}$ in $A^2(\D)$ that is convergent to zero weakly but its image 
under a ``local" Hankel operator does not converge to zero. 

Since, by Lemma \ref{LemFunc}, the function $f(z)=z_n^{-n+1}$ is not 
square integrable on $\D^0$ (an $(n-1)$-dimensional slice of $\D$) 
and the $L^2$-norm of $(z_n-i\delta)^{-n+1}$ on $\D^0$ continuously 
depends on $\delta>0$, we can choose a positive sequence $\{\delta_j\}$ 
such that $\delta_j\to 0$ as $j\to\infty$ and $\|f_j\|_{\frac{1}{2}\D^0}=1$ 
where     
\begin{align}\label{eq1}
	f_j(z)=\frac{1}{j(z_n-i\delta_j)^{n-1}}.
\end{align}
Furthermore, $|f_j(4z)|\leq |f_j(z)|$ for all 
$z\in \frac{1}{2}\D^0$ as $\D\subset \{y_n<0\}$ and $\delta_j>0$. 
Then,  
\[\int_{2\D^0}|f_j(\xi)|^2dV(\xi) 
=16^{n-1}\int_{\frac{1}{2}\D^0}|f_j(4\eta)|^2dV(\eta) 
\leq  16^{n-1}\int_{\frac{1}{2}\D^0}|f_j(\eta)|^2dV(\eta)=16^{n-1}.\] 
Hence $\{f_j\}$ is a bounded sequence in $A^2(\D)$ 
(as $\|f_j\|_{2\D^0}$ is uniformly bounded) and  
$f_j\to 0$ weakly in $A^2(\D)$ as $j\to \infty$.

The rest of the proof follows the proof of Theorem \ref{ThmC2}. 
Namely, we define $\Gamma_1=\{z\in \C:|z|<\frac{r_1}{2}\}$ and 
$\widetilde{\phi}=\phi|_{\Gamma\times \{0\}}$. Without 
loss of generality, we may assume that $\phi\in C(\C^n)$. 
We define  
\[\phi_k=E(\widetilde{\phi}*\chi_k)\] 
where $E$ denotes trivial extension from $\{(z_1,z'')\in \C^2:z''=0\}$ 
to $\C^n$, respectively. Using Lemma \ref{LemConv} we can choose  
$\delta>0$ and $h\in C^{\infty}_0(\Gamma_1)$ so that, by passing 
to a subsequence if necessary, we can assume that 
\[\left|\langle (\phi_k)_{\zb_1}, h\rangle_{\Gamma_1}\right|
	\geq \delta>0\]
for all $k=1,2,3,\ldots$. Then for $\D_1=\D\cap B(p,r_1)$ we get 
\[\frac{\delta}{\|h_{z_1}\|_{\Gamma_1}} 
	\leq \|H^{\D_1}_{\phi_k}Rf_j\|_{\D_1}\]
for all $j,k$ where $R:A^2(\D)\to A^2(\D_1)$ is the restriction 
operator. Then letting $j,k\to \infty$ we get 
\begin{align}\label{eq2}
	0<\frac{\delta}{\|h_{z_1}\|_{\Gamma_1}} 
	\leq\liminf_{j\to \infty} \|H^{\D_1}_{\phi}Rf_j\|_{\D_1}.
\end{align} 
Hence,  $H^{\D_1}_{\phi}R$ is not compact and we reach a 
contradiction with the assumption that $H_{\phi}$ is compact. 
Therefore,  the proof of Theorem \ref{ThmConvex} is complete.   
\end{proof}

\begin{proof}[Proof of Corollary \ref{Cor1}]
Suppose $\D\subset \mathbb{C}^n$ is a bounded locally convexifiable 
domain, $\phi\in C(\overline{\D})$ is such that $H_{\phi}$ is compact 
on $A^2(\D)$, and $f:\mathbb{D}\rightarrow b\D$ is a holomorphic 
function.  Let $p\in f(\mathbb{D})$ and choose $r>0$ such that 
$B(p,r)\cap \D$ is convexifiable. Furthermore, without loss of generality, 
we may assume that the range of $f$ is contained in $B(p,r/2)$. Then 
using Lemma \ref{LemLocalization} and Lemma \ref{LemHoloInvariance} 
(and shrinking $r$ is necessary) we may assume that  $U=B(p,r)\cap \D$ 
is convex and $H^V_{\phi}R$ is compact on $A^2(U)$ where 
$V=B(p,r/2)\cap \D$ and $R:A^2(U)\to A^2(V)$ is the restriction 
from $U$ onto $V$. Then the proof of Theorem \ref{ThmConvex} 
implies that $\phi\circ f$ is holomorphic. 
\end{proof}

\begin{proof}[Proof of Corollary \ref{Cor2}]
Let $\D\subset \C^2$ be a bounded pseudoconvex complete Reinhardt 
domain, $\phi\in C(\Dc)$, and $H_{\phi}$ is compact on $A^2(\D)$. 
By \cite[Theorem 3.28]{RangeBook}, $\D$ is locally convexifiable away from 
the coordinate axes under the map $(z_1,z_2)\to (\log z_1,\log z_2)$. 

Assume that there is a non-trivial analytic disc in the boundary away 
from the coordinate axes. Then there exists a non-constant holomorphic 
function $f:\mathbb{D}\rightarrow b\D$  such that 
$f(\mathbb{D}) \subset  \{(z_1,z_2)\in \C^2:z_1\neq 0 \text{ and } z_2\neq 0\}$.
Using an argument similar to the one in the proof of 
Corollary \ref{Cor1} we conclude that $\phi\circ f$ is holomorphic. Therefore, 
$\phi$ is holomorphic along any disc away from the coordinate axis. 

Next, if the disc intersects one of the coordinate axis,  without loss of generality, 
we assume that $f(\mathbb{D}) \cap \{(z_1,z_2)\in \C^2:z_1= 0\}\neq \emptyset$. 
Let $f=(f_1,f_2)$.  Then $f_1:\mathbb{D}\to \C$ has a zero. Since zeroes of a 
holomorphic function on a planar domain are isolated, we can choose $f$ 
so that $f_1(z)=0$ if and only if $z=0$. Therefore, we may assume that  
$f(z)$ is on a coordinate axis if and only if $z=0$. Then,  similarly as 
in the previous paragraph, we conclude that $\phi\circ f$ is holomorphic 
on $\mathbb{D}\setminus \{0\}$. Furthermore, 0 is a removable singularity 
for $\phi\circ f$ as $\phi\circ f$ is continuous on $\mathbb{D}$. 
That is, $\phi\circ f$ is holomorphic on $\mathbb{D}$.
\end{proof}	

\section*{Acknowledgment}

We would like to thank Emil Straube for reading an earlier manuscript of this 
paper and for providing us with valuable comments. We also thank the referee 
for feedback that has improved the exposition of the paper.


\end{document}